\theoremstyle{definition} \newtheorem{theorem}{Theorem}[section]
\theoremstyle{definition} \newtheorem{definition}[theorem]{Definition}
\theoremstyle{definition} \newtheorem{lemma}[theorem]{Lemma}
\theoremstyle{definition} \newtheorem{proposition}[theorem]{Proposition}
\theoremstyle{definition} \newtheorem{corollary}[theorem]{Corollary}
\theoremstyle{definition} 
\theoremstyle{definition} 
\theoremstyle{definition} 
\theoremstyle{definition} \newtheorem{remark}[theorem]{Remark}
\theoremstyle{definition} 
\newcommand{\eat}[1]{}
\begin{document}

\title{An Algebra of Fault Tolerance}
\date{}
\author{\begin{tabular}[t]{c@{\extracolsep{2em}}c} 
    Shrisha Rao\footnote{\tt shrao@alumni.cmu.edu} \\
    International Institute of Information Technology - Bangalore \\ 
    Bangalore 560 100 \\ India
\end{tabular}}
\maketitle

\begin{abstract}

  Every system of any significant size is created by composition from
  smaller sub-systems or components.  It is thus fruitful to analyze
  the fault-tolerance of a system as a function of its composition.
  In this paper, two basic types of system composition are described,
  and an algebra to describe fault tolerance of composed systems is
  derived.  The set of systems forms monoids under the two composition
  operators, and a semiring when both are concerned.  A partial
  ordering relation between systems is used to compare their
  fault-tolerance behaviors.  

\end{abstract}

{\bf Keywords:} systems, composition, fault tolerance, algebra,
semirings

{\bf 2000 MSC:} 68M15, 16Y60

\section{Introduction}

Fault tolerance is a subject of immense importance in the design and
analysis of many kinds of systems.  It has been studied in distributed
computing~\cite{JCT92,J94,Gartner99} and elsewhere in computer
science~\cite{Abb90,arora1992thesis}, in the context of
safety-critical systems~\cite{Rush94,MIT97}, and in many other places.
Authors such as Perrow~\cite{Perrow99} and Neumann~\cite{Neu95} note
many instances where standard assumptions made in designing
fault-tolerant systems seem not to hold, and draw conclusions from
these deviations.

In this paper, we take a different, algebraic view of fault tolerance,
based on the composition of a system.  The basic notion is that every
system of any significant size is created by composition from smaller
sub-systems or components.  The fault-tolerance of the overall system
is influenced by that of the components that underlie it.  Conversely,
a system may itself be a module or part of a larger system, so that
its fault-tolerance affects that of the whole of which it is a part.

It is thus fruitful to analyze the fault tolerance of a system as a
function of its composition.  In the preliminary analysis, it is
assumed that component failures occur independently---failure of one
component does not automatically imply the failure of another.

Composition of components to create a larger system is considered in
Section~\ref{preliminaries} to happen in two ways: direct sum, denoted
\(+\), and direct product, denoted \(\times\).  This is then used to
describe an arithmetic on systems.  The fault tolerance of systems is
formally described in Section~\ref{faulttolerance} using functions
from the set of systems to the set of natural numbers, whose basic
properties are obtained.  In Section~\ref{monoid}, the arithmetic on
systems is extended to define system monoids by direct sum and direct
product, and later, a system semiring considering both.  Using this as
a basis, a partial ordering of systems by fault tolerance is given in
Section~\ref{partialordering}, and basic properties are derived.  In
Section~\ref{eqclasses}, it is shown that monoids can be defined on
the set of equivalence classes under fault tolerance, of systems
(rather than on the set of systems as done in Section~\ref{monoid}).
Section~\ref{eqclasssemirings} briefly indicates how this could be
used to create a semiring on the set of equivalence classes of systems
under worst-case fault tolerance, essentially allowing for the
development of results analogous to those of
Section~\ref{partialordering}.  

The mathematics used is standard algebra as might be learned in a
first-level graduate or upper-level undergraduate course, and is
mostly self-explanatory.  Standard works in the field such as
Hungerford~\cite{Hungerford} or Lang~\cite{Lang2002}, and monographs
on semiring theory~\cite{Golan1999, HebWein1998} may be useful
references.

\section{Notation and Preliminaries} \label{preliminaries}

We use lowercase $p$, $q$, etc., to denote individual components that
are assumed to be \emph{atomic} at the level of discussion, i.e., they
have no components or sub-systems of their own.  Systems that are not
necessarily atomic are denoted by \(A, B\), etc., with or without
subscripts.  Where particular clarity is required, \emph{component}
will be used to refer to an atomic component, while \emph{subsystem}
will be used to refer to a component that is not atomic.  Sets of
systems---atomic or not--- are denoted by \(\mathcal{P},
\mathcal{Q}\), etc., and in particular \(\mathcal{U}\) is the
universal set of all systems in the domain of discourse.  The set of
natural numbers is denoted by \(\mathbb{N}\), and the set of integers
by \(\mathbb{Z}\).  Other mathematical terms and notation are defined
in the sections where they are first found.

In the beginning, we assume that our components and subsystems are
\emph{disjoint}, i.e., that they do not share any components among
themselves, and that they fail, if they do, independently of one
another.  

A \emph{fault} is a failure of a subsystem or component, while a
\emph{failure} applies to some system as a whole, but the latter term
will also be applied in the context of events that are faults from a
larger system perspective but may be considered failures from a
component or subsystem perspective.

Let $A$ and $B$ be two components that can form a system.  

\begin{definition} \label{def_sum}

The $+$ operator is considered to apply for systems consisting of two
components when the failure of either would cause the system as a
whole to fail.  Equivalently, we may say that the direct sum \(A + B\)
of components $A$ and $B$ is a system where the failure of either $A$
or $B$ will cause overall system failure.

\end{definition}

One example is a computer program that needs two resources, such as
two files, in order to perform, with lack of functionality resulting
from the lack of availability of either.

A more basic example of such a system is two light-bulbs connected in
series, with a voltage applied to cause them to glow.  If either bulb
burns out, the connection is broken and system failure occurs.

In colloquial terms, a direct sum is a situation where the ``weakest
link'' component needs to fail, for the system to fail.

\begin{definition} \label{def_product}

The $\times$ operator applies for systems consisting of two components
when the failure of both is necessary to cause the sytem as a whole to
fail.  Equivalently, we may say that the direct product \(A \times B\)
of components $p$ and $q$ is a system where the failure of both is
necessary to cause overall system failure.

\end{definition}

One example is a computer program that needs a file that is available
in two replicas; the program would only fail to perform if both
replicas were to be unavailable.

A more basic example of such a system is one consisting of two
light-bulbs connected in parallel across a voltage source, with the
system considered functional if at least one bulb glows.  System
failure occurs only when both bulbs burn out.

In colloquial terms, a direct product is a situation where the
``strongest link'' component needs to fail, for the system to fail.

Note that these composition operators are different from the way
composition is described elsewhere in the literature~\cite{AL93,AL95},
and are also not significantly related to the theory of ``fault
tolerance components'' proposed by Arora and
Kulkarni~\cite{arora1998icdcs}.

Given the previous definitions of the direct sum and direct product
operators, we obtain an ``arithmetic'' on systems as
follows~\cite{shrao}, where \(=\) stands for isomorphism.

\begin{proposition} \label{opsproperties}

Given disjoint components $A$, $B$, and $C$, we have the following.

\begin{itemize}

\item[(i)] $+$ and $\times$ are commutative and associative: \(A
  \times B = B \times A\); \(A + C = C + A\); \(A + (B + C) = (A + B)
  + C\); and \(A \times (B \times C) = (A \times B) \times C\).

\item[(ii)] $\times$ distributes over $+$: \(A \times (B + C) = (A
  \times B) + (A \times C)\).

\end{itemize}
\end{proposition}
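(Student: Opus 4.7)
The plan is to reduce each identity to a tautology in propositional logic about component failure events. For any system $S$ built from atomic components by $+$ and $\times$, let $F(S)$ denote the Boolean event ``$S$ has failed''; by Definitions~\ref{def_sum} and \ref{def_product} we have $F(A+B) \equiv F(A) \vee F(B)$ and $F(A\times B) \equiv F(A) \wedge F(B)$. Since the claimed equalities are isomorphisms of systems, it suffices to show that the two sides induce the same failure predicate on the underlying (disjoint) atomic components; then the systems exhibit identical fault behavior and are isomorphic in the sense intended.

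First I would dispatch commutativity of $+$ by expanding $F(A+B) \equiv F(A)\vee F(B) \equiv F(B)\vee F(A) \equiv F(B+A)$, invoking commutativity of $\vee$. Commutativity of $\times$ is identical with $\wedge$ in place of $\vee$. Associativity of $+$ follows by writing
\[
F\bigl(A + (B+C)\bigr) \equiv F(A) \vee \bigl(F(B)\vee F(C)\bigr) \equiv \bigl(F(A)\vee F(B)\bigr)\vee F(C) \equiv F\bigl((A+B)+C\bigr),
\]
using associativity of $\vee$; associativity of $\times$ is the analogous computation with $\wedge$. This disposes of part~(i).

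For part~(ii), I would compute both sides via the same translation:
\[
F\bigl(A \times (B+C)\bigr) \equiv F(A) \wedge \bigl(F(B) \vee F(C)\bigr),
\]
\[
F\bigl((A\times B) + (A\times C)\bigr) \equiv \bigl(F(A)\wedge F(B)\bigr) \vee \bigl(F(A)\wedge F(C)\bigr),
\]
and the two right-hand sides coincide by distributivity of $\wedge$ over $\vee$ in Boolean algebra.

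The main subtlety, and the step that needs comment rather than calculation, is the distributivity law, because the component $A$ appears twice on the right-hand side, which seems to violate the standing disjointness assumption. I would address this by clarifying that the equality is an isomorphism of failure behavior rather than a literal identification of physically distinct copies of $A$: the right-hand expression is to be read with $A$ treated as the same atomic component appearing in two composite contexts, so that the failure events $F(A\times B)$ and $F(A\times C)$ share the common conjunct $F(A)$. With that reading understood, all five identities follow mechanically from the corresponding Boolean identities, and no separate induction on structure is needed since the statements concern only three components.
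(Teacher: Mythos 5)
Your proposal is correct and takes essentially the same approach as the paper: the paper declares part (i) obvious from the definitions and proves part (ii) by observing that $A \times (B+C)$ and $(A\times B)+(A\times C)$ fail under precisely the same conditions, which is exactly what you formalize by translating each system into its Boolean failure predicate and invoking the corresponding identities for $\vee$ and $\wedge$. Your added remark that the repeated occurrence of $A$ in the distributive law must be read as a shared component (not a fresh disjoint copy) is a clarification the paper's one-line proof silently assumes, and it is the right reading.
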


\begin{proof}

  The properties given in $(i)$ are obvious from the
  Definitions~\ref{def_sum} and~\ref{def_product}.

  For the distributive property of $(ii)$, consider that \(A \times
  (B+C)\) will fail when both $A$ and \(B+C\) fail, and that this
  happens precisely when \((A \times B) + (A \times C)\)
  fails. \end{proof}

Therefore, systems are described by polynomial expressions on
variables denoting their components, such as \((P^3 + 3Q) \times 5R^2\).

In addition, by analogy with regular arithmetic, we can say the
following.

\begin{proposition} \label{sumofproducts}

Any system described by an expression of direct sum and product
operations on components \(A_{i,j}\) can be expressed in a canonical
\emph{sum of products} form such as \(\{A_{0,0} \times A_{0,1} \times
\ldots \times A_{0,n_0 - 1}\} + \{A_{1,0} \times A_{1,1} \times \ldots
\times A_{1,n_1 - 1}\} + \{A_{m - 1,0} \times A_{m - 1,1} \times
\ldots \times A_{m - 1, n_{m-1} -1}\}\).

\end{proposition}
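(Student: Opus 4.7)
The plan is to prove this by structural induction on the expression describing the system, leveraging the algebraic properties established in Proposition~\ref{opsproperties}. The driving idea is to push every $\times$ inward past every $+$ using distributivity, so that at the top level we are left with a sum whose summands are pure products of atomic components.

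For the base case, a single component $A$ is trivially in sum-of-products form, since it can be read as a one-term sum whose single summand is a one-factor product. For the inductive step, suppose an expression $E$ is built from two sub-expressions $E_1$ and $E_2$, each of which by the induction hypothesis admits a sum-of-products representation, say $E_1 = P_0 + P_1 + \cdots + P_{m-1}$ and $E_2 = Q_0 + Q_1 + \cdots + Q_{n-1}$, where each $P_i$ and $Q_j$ is a $\times$-product of atomic components. If $E = E_1 + E_2$, then by associativity of $+$ (Proposition~\ref{opsproperties}(i)) the concatenated expression $P_0 + \cdots + P_{m-1} + Q_0 + \cdots + Q_{n-1}$ is already in the required form. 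If $E = E_1 \times E_2$, then iterated application of distributivity (Proposition~\ref{opsproperties}(ii)), together with commutativity of $\times$ to distribute on the left, yields
\[
E \;=\; \Bigl(\sum_i P_i\Bigr) \times \Bigl(\sum_j Q_j\Bigr) \;=\; \sum_{i,j} (P_i \times Q_j),
\]
and by associativity of $\times$ each $P_i \times Q_j$ flattens into a single product of atomic components, so $E$ is again in sum-of-products form.

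No step constitutes a real obstacle; the mildest technical point is that the distributive law as stated applies to a sum of two summands, so extending it to sums of arbitrary length $m$ and $n$ requires a small auxiliary induction, again justified by associativity of $+$. It is also worth remarking that the representation is not unique as written: commutativity of both $+$ and $\times$ permits arbitrary reordering of the outer summands $P_i$ as well as of the factors within each product, so ``canonical'' here should be read in the sense of \emph{normal form} rather than literal uniqueness of expression.
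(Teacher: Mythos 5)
Your proof is correct. The paper in fact offers no proof of this proposition at all --- it is asserted ``by analogy with regular arithmetic,'' i.e., by tacit appeal to the usual expansion of polynomial expressions into normal form --- and your structural induction is precisely the argument that analogy presupposes: the base case of a single atom, concatenation of summands via associativity of $+$ for the sum case, and iterated distributivity plus flattening via associativity of $\times$ for the product case. You also correctly handle the two points the paper glosses over: the stated distributive law in Proposition~\ref{opsproperties}(ii) is binary and one-sided, so left-distribution over a sum of arbitrarily many terms genuinely needs commutativity of $\times$ together with a small auxiliary induction; and ``canonical'' can only mean \emph{normal form up to reordering}, since commutativity of both operations destroys literal uniqueness. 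Nothing is missing.
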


In Proposition~\ref{sumofproducts}, there are $m$ product terms to be
added together, and a product term $i$ consists of the direct product
of $n_i$ components.

\section{Fault Tolerance of Composed Systems} \label{faulttolerance}

About systems that are members of \(\mathcal{U}\), we can state
the following results from~\cite{shrao}, with \(A^n\) describing the
direct product of \(A \in \mathcal{U}\) with itself $n$ times, and
\(nA\) describing the direct sum of \(A \in \mathcal{U}\) with itself
$n$ times.

\begin{lemma}

  A system \(A^n\) can tolerate faults in \(n - 1\) of the subsystems,
  where \(n \in \mathbb{Z}_+\), and a system \(nA\) can tolerate zero
  faults.

\end{lemma}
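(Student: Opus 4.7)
The plan is to derive both claims directly from Definitions~\ref{def_sum} and~\ref{def_product}, using the associativity established in Proposition~\ref{opsproperties}(i) to interpret $A^n$ and $nA$ unambiguously as iterated applications of $\times$ and $+$, respectively. Since the atomic failure/non-failure behavior of each subsystem is what drives everything, I would read ``tolerates $k$ faults'' as: the composed system remains functional so long as no more than $k$ of its constituent copies of $A$ have failed, but some configuration of $k+1$ faulty copies causes the system to fail.

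For $A^n$, I would argue by induction on $n$. The base case $n=1$ gives $A^1 = A$, which tolerates zero faults trivially. For the inductive step, write $A^{n+1} = A^n \times A$; by Definition~\ref{def_product}, this composite fails exactly when both $A^n$ and the extra copy of $A$ fail. The inductive hypothesis says that $A^n$ fails only when all $n$ of its copies have failed, so $A^{n+1}$ fails precisely when all $n+1$ copies have failed. Hence it survives any configuration with at most $n$ faulty copies and is killed by $n+1$ faulty copies, giving a fault-tolerance of exactly $n$ for $A^{n+1}$, i.e., $n-1$ for $A^n$ in the original indexing.

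For $nA$, essentially the dual argument applies, and is even shorter. By Definition~\ref{def_sum}, the sum $A + A$ fails the moment either summand fails, so a single faulty copy brings it down. Extending by associativity/induction, $nA = (n-1)A + A$ fails as soon as any one copy of $A$ fails, so $nA$ tolerates zero faults.

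The argument is really just unwinding the definitions, so there is no substantial obstacle; the only point to be careful about is that all subsystems here are identical copies of $A$, which means ``worst case'' and ``best case'' over which subsystems fail coincide and no case analysis is needed. A secondary point worth a sentence is that associativity from Proposition~\ref{opsproperties}(i) is what legitimizes the notations $A^n$ and $nA$ in the first place, so the induction on the number of iterated operations is well-defined.
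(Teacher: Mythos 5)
Your proposal is correct and takes essentially the same approach as the paper, which simply observes from Definitions~\ref{def_sum} and~\ref{def_product} that the direct product fails only when all $n$ copies fail while the direct sum fails as soon as any one copy fails; your explicit induction merely formalizes what the paper treats as immediate.
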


\begin{proof}

  This is immediate, given the definitions of the direct sum and
  product.  A system composed of the direct sum of $A$ with itself $n$
  times will fail should any one of the components fail, whilst one
  composed of the direct product of $A$ with itself $n$ times will
  only fail if all of them fail (i.e., it can tolerate faults in \(n -
  1\) of them). \end{proof}

As a consequence, we have the following.

\begin{corollary} \label{theoremmpn1}

  A system \(mA^n\) consists of \(m \times n\) subsystems, where \(m,n
  \in \mathbb{Z}_+\), but can only tolerate faults in \(n - 1\) of
  them in the worst case.

\end{corollary}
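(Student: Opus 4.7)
The plan is to read $mA^n$ as an iterated composition that combines both operators, and then apply the preceding lemma to each layer in turn. Concretely, $mA^n$ is by definition the direct sum of $m$ copies of $A^n$, and each $A^n$ is the direct product of $n$ atomic copies of $A$. Counting atomic components is then routine: $m$ summands of $n$ components each gives $m \times n$ subsystems total.

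The substantive claim is the worst-case bound. First I would establish the upper bound on tolerance. By Definition~\ref{def_sum}, the outer direct sum fails as soon as \emph{any} of the $m$ inner $A^n$ blocks fails; by the previous lemma, a single $A^n$ block can be made to fail by faulting all $n$ of its atomic components. So an adversary who concentrates all faults in one block can cause total system failure with exactly $n$ faults, showing that the worst-case tolerance is at most $n-1$.

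For the matching lower bound I would argue that $n-1$ faults, placed anywhere, cannot bring the system down. Any distribution of $n-1$ faults across the $m$ blocks leaves each block with at most $n-1$ faulty atomic components, hence (by the lemma applied to $A^n$) every block is still functional; since no summand has failed, the direct sum $mA^n$ is still functional. Combining the two bounds gives exactly $n-1$ as the worst-case number of tolerable faults.

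I do not anticipate a real obstacle here, since both the counting step and the two-sided bound follow by direct application of Definitions~\ref{def_sum} and~\ref{def_product} together with the preceding lemma; the only care needed is to phrase "worst case" properly, namely as the largest $k$ such that \emph{every} placement of $k$ faults leaves the system functional, so that the upper and lower bound arguments truly pin down the same quantity.
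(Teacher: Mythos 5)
Your proposal is correct and follows the same route as the paper, which states this corollary as an immediate consequence of the preceding lemma (the direct sum of $m$ blocks fails when any one block fails, and a block $A^n$ fails only when all $n$ of its components fail) and gives no further proof. Your version simply spells out the two-sided bound and the component count that the paper leaves implicit, and your reading of ``worst case'' agrees with Definition~\ref{def_faulttolerance}$(ii)$.
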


Notice, however, that the following holds.

\begin{corollary} \label{theoremmpn2}

  In the best case a system \(mA^n\) can tolerate faults in \(m \times
  (n - 1)\) components.

\end{corollary}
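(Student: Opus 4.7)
The plan is to argue directly from the definitions of direct sum and direct product, using the sum-of-products structure already made explicit in Proposition~\ref{sumofproducts} and the preceding lemma on $A^n$.

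First I would recall the structure: $mA^n$ is the direct sum of $m$ terms, each of which is a copy of $A^n$, so it is a sum-of-products system with $m$ product terms each containing $n$ atomic components, for a total of $mn$ components (consistent with Corollary~\ref{theoremmpn1}). By Definition~\ref{def_sum}, the whole system fails as soon as any one of the $m$ summands fails; by Definition~\ref{def_product}, a single summand $A^n$ fails only when all $n$ of its components have failed.

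Next I would exhibit a best-case fault pattern. For each of the $m$ summands $A^n$, designate one of its $n$ components as ``spared.'' Let faults occur in all $n-1$ of the non-spared components in every summand. Then each summand still has one surviving component, so by the direct-product definition no summand has failed; hence by the direct-sum definition the composite system $mA^n$ has not failed. The total number of faults in this scenario is $m(n-1)$, establishing that at least $m(n-1)$ faults are tolerable in the best case.

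To complete the statement I would observe the matching upper bound: any set of $m(n-1)+1$ faults must, by pigeonhole, place $n$ faults within some single summand, thereby causing that $A^n$ to fail and the direct sum with it. I do not expect any real obstacle here; the only subtlety is being explicit that ``best case'' refers to the most favorable distribution of a given number of faults across components, so the claim is about the maximum number of faults for which \emph{some} tolerating distribution exists, rather than the worst-case bound already given in Corollary~\ref{theoremmpn1}.
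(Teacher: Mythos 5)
Your proposal is correct and follows essentially the same route as the paper: exhibiting the fault pattern in which each of the $m$ summands sustains $n-1$ failures while retaining one surviving component. The only difference is that you also supply the matching pigeonhole upper bound, which the paper leaves implicit; that is a harmless (and arguably welcome) strengthening rather than a different approach.
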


\begin{proof}

  In the best case, a system \(mA^n = A^n + \ldots + A^n\) would have
  \((n - 1)\) failures in each of the \(A^n\) and still be functional,
  for a tolerance of \(m \times (n - 1)\) faults. \end{proof}

In the previous results, we have dealt with just one type of component.
However, the same idea can be generalized to multiple types of
components, as follows.  It is assumed here that a failure of any
component, regardless of its type, is of equal value and adds 1 to the
number of failures in the overall system.

The idea is to specify the fault-tolerance (in the best case and in
the worst case) recursively, in terms of sub-systems.

\begin{definition} \label{def_faulttolerance}

  If, as stated, previously, $\mathcal{U}$ is the set of all systems,
  then \(\Phi_{best}: \mathcal{U} \rightarrow \mathbb{N} \) and
  \(\Phi_{worst}: \mathcal{U} \rightarrow \mathbb{N}\) are functions
  giving the best- and worst-case fault-tolerances of a system, such
  that:

\begin{itemize}

\item[(i)] \(\Phi_{best}(A) = i\) for \(A \in \mathcal{U}\), if $i$ is
  the cardinality of the largest set of components of $A$ that can
  fail without causing $A$ to fail.

\item[(ii)] \(\Phi_{worst}(B) = j\) for \(B \in \mathcal{U}\), if
  \(j+1\) is the cardinality of the smallest set of components in $B$
  whose failure causes $B$ to fail.

\end{itemize}

\end{definition}

Quite obviously, we have \(\Phi_{best}(A) \geq \Phi_{worst}(A),
\forall A \in \mathcal{U}\).  The values of these functions may be
computed recursively as indicated in the following theorem.

\begin{theorem} \label{theoremfaulttolerance}

  Given systems \(A, B \in \mathcal{U}\), if $|A|$ and $|B|$ are the
  numbers of components therein, the following hold:

\begin{itemize}

\item[(i)] \(\Phi_{best}(A \times B) = \max\{\Phi_{best}(A) + |B|, |A|
  + \Phi_{best}(B)\}\);
\item[(ii)] \(\Phi_{worst}(A \times B) = \Phi_{worst}(A) +
  \Phi_{worst}(B) + 1\);
\item[(iii)] \(\Phi_{best}(A + B) = \Phi_{best}(A) + \Phi_{best}(B)\);
\item[(iv)] \(\Phi_{worst}(A + B) = \mathrm{min}\{\Phi_{worst}(A),
  \Phi_{worst}(B)\}\) .

\end{itemize}

\end{theorem}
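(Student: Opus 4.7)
The plan is to prove each clause by directly unpacking Definition~\ref{def_faulttolerance} against the failure semantics of the two composition operators (Definitions~\ref{def_sum} and~\ref{def_product}): $A \times B$ survives iff at least one of $A, B$ survives, while $A + B$ survives iff both $A$ and $B$ survive. I would handle the two $\Phi_{best}$ clauses together and the two $\Phi_{worst}$ clauses together, since the former demand arguments about maximum ``survivor'' sets of components and the latter about minimum ``killer'' sets, and the two operators play dual roles in each pair. Throughout, the independence assumption of Section~\ref{preliminaries} lets me freely combine component-level events across the two factors.

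For (i), I would fix a maximum set $S$ of components of $A \times B$ whose simultaneous failure leaves the product alive. Then at least one factor must remain alive; say $A$ does. Its components appearing in $S$ number at most $\Phi_{best}(A)$, whereas all $|B|$ components of $B$ may be included in $S$, giving $|S| \le \Phi_{best}(A) + |B|$; the symmetric bound holds when $B$ is the surviving factor. Both bounds are achieved by taking an optimal survivor set of one factor together with every component of the other, so the maximum of the two is attained. For (iii) the argument is more direct: survival of $A + B$ forces survival of both factors, so $S$ splits as $S_A \cup S_B$ with $|S_A| \le \Phi_{best}(A)$ and $|S_B| \le \Phi_{best}(B)$, with equality achievable by choosing optimal survivor sets for each factor independently.

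For the $\Phi_{worst}$ clauses I would identify minimum-cardinality killer sets instead. In (ii), a killer set for $A \times B$ must down both factors, so it contains a killer set for $A$ and a killer set for $B$; minimality forces each of these to be minimum, giving cardinality $(\Phi_{worst}(A)+1) + (\Phi_{worst}(B)+1)$, and subtracting the ``$+1$'' from Definition~\ref{def_faulttolerance} yields the stated $\Phi_{worst}(A) + \Phi_{worst}(B) + 1$. In (iv), a killer set for $A + B$ need only down one factor, so its minimum cardinality is $\min\{\Phi_{worst}(A)+1, \Phi_{worst}(B)+1\}$, which after the same adjustment becomes $\min\{\Phi_{worst}(A), \Phi_{worst}(B)\}$.

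The one clause requiring genuine care is (i): one must verify not merely that the asserted maximum is \emph{achievable} but that it cannot be exceeded. The key observation is that once one of $A, B$ is required to stay alive, the other factor may be failed in its entirety at no additional cost, so any ``mixed'' strategy in which both factors only partially survive is dominated by the corresponding ``one-sided'' strategy (indeed $\Phi_{best}(A) + \Phi_{best}(B) \le \Phi_{best}(A) + |B|$ since $\Phi_{best}(B) \le |B|$). Once this dichotomy is established, the remaining three clauses reduce to a direct translation of Definitions~\ref{def_sum}, \ref{def_product}, and~\ref{def_faulttolerance}, and I anticipate no substantive obstacles beyond this bookkeeping.
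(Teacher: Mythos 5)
Your proposal is correct and follows essentially the same route as the paper's own proof: both arguments unpack Definition~\ref{def_faulttolerance} directly against the failure semantics of $+$ and $\times$, pairing the two $\Phi_{best}$ clauses and the two $\Phi_{worst}$ clauses. Your version is somewhat more rigorous than the paper's --- in particular you explicitly supply the upper-bound (non-exceedance) half of clause (i) and the disjoint-splitting of killer sets in clause (ii), which the paper only gestures at --- but the underlying idea is identical.
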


\begin{proof}

  For parts $(i)$ and $(ii)$, we know that \(A \times B\) will fail only
  when both $A$ and $B$ fail.

  Therefore, \(\Phi_{best}(A \times B)\) reflects the case where one
  of $A$ or $B$ fails fully (every component in it failing), and the
  other has its best-case limit of component failures.  Therefore, the
  total number is \(\max\{\Phi_{best}(A) + |B|, |A| +
  \Phi_{best}(B)\}\) failures.  Likewise, \(\Phi_{worst}(A \times B) =
  \Phi_{worst}(A) + \Phi_{worst}(B) + 1\).

  For parts $(iii)$ and $(iv)$, we know that a system \(A + B\) will
  fail should either $A$ or $B$ fail.

  Therefore, \(\Phi_{best}(A + B) = \Phi_{best}(A) + \Phi_{best}(B)\),
  because in the best case $A$ and $B$ will sustain their limit of
  failures and yet not fail.

  Likewise, \(\Phi_{worst}(A + B) = \mathrm{min}\{\Phi_{worst}(A),
  \Phi_{worst}(B)\}\), as in the worst case the minimum of
  \(\mathrm{min}\{\Phi_{worst}(A), \Phi_{worst}(B)\}\) is all that is
  necessary to cause one of the two, $A$ or $B$, to fail. \end{proof}

The theorem just stated can also be extended in the obvious way to
systems of \(n > 2\) components.  

\begin{theorem} \label{corollaryfaulttolerance}

  Given subsystems \(A_i \in \mathcal{U}, 0 \leq i \leq n-1\), we have
  the following, with \(\Pi\) and \(\Sigma\) denoting, respectively,
  the direct product and direct sum of multiple subsystems, and
  \(|A_i|\) denoting the number of components in subsystem \(A_i\):

\begin{itemize}

\item[(i)] For the best-case tolerance of a product of $n$ subsystems:
\[\Phi_{best}(\prod^{n-1}_{i=0} A_i) = \sum^{n-1}_{i=0}|A_i| - |A_k| + \Phi_{best}(A_k),\] 

where $A_k$ is a subsystem with the lowest difference between its own
number of components and its own best-case fault tolerance.

\item[(ii)] For the worst-case tolerance of a product of $n$ subsystems:
\[\Phi_{worst}(\prod^{n-1}_{i=0} A_i) = \sum^{n-1}_{i=0}\Phi_{worst}(A_i) + n - 1\]

\item[(iii)] For the best-case tolerance of a sum of $n$ subsystems:
\[\Phi_{best}(\sum^{n-1}_{i=0} A_i) = \sum^{n-1}_{i=0} \Phi_{best}(A_i)\]

\item[(iv)] For the worst-case tolerance of a sum of $n$ subsystems:
\[\Phi_{worst}(\sum^{n-1}_{i=0} A_i) = \min\{\Phi_{worst}(A_i)\}\]

\end{itemize}

\end{theorem}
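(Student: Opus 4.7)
The plan is to prove all four parts by induction on $n \geq 2$, with Theorem \ref{theoremfaulttolerance} serving as both the base case and the engine of each inductive step. For each part, I would peel off the last factor (or summand), writing $\prod_{i=0}^{n-1} A_i = \bigl(\prod_{i=0}^{n-2} A_i\bigr) \times A_{n-1}$ in the product cases and analogously for sums, then apply the two-subsystem formula on top of the inductive hypothesis. Throughout I will use the easy fact that the component count is additive in both compositions, so $|\prod_{i=0}^{n-1} A_i| = |\sum_{i=0}^{n-1} A_i| = \sum_{i=0}^{n-1} |A_i|$.

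Parts (ii), (iii), and (iv) should be routine. Part (ii) accumulates a $+1$ per step, producing $+(n-1)$ after $n-1$ applications of the binary rule; part (iii) simply adds $\Phi_{best}$ values; part (iv) repeatedly uses associativity of binary minimum, $\min\{\min\{x_1,\dots,x_{n-1}\},x_n\} = \min\{x_1,\dots,x_n\}$. All three collapse to one-line inductive computations from the corresponding clauses of Theorem \ref{theoremfaulttolerance}.

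The main obstacle is part (i), because the distinguished index $k$ minimizing $|A_i| - \Phi_{best}(A_i)$ has to be tracked coherently through the induction. My plan is to recast the binary formula in the equivalent ``defect'' form
\[
\Phi_{best}(X \times Y) \;=\; |X| + |Y| \;-\; \min\bigl\{|X| - \Phi_{best}(X),\; |Y| - \Phi_{best}(Y)\bigr\},
\]
so that the quantity $|{\cdot}| - \Phi_{best}(\cdot)$ behaves well under composition. Setting $X = \prod_{i=0}^{n-2} A_i$ and $Y = A_{n-1}$, the inductive hypothesis gives $|X| - \Phi_{best}(X) = \min_{0 \leq i \leq n-2}\{|A_i| - \Phi_{best}(A_i)\}$, and the outer minimum then absorbs $|A_{n-1}| - \Phi_{best}(A_{n-1})$ to yield the global minimum over $0 \leq i \leq n-1$. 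Rearranging recovers the stated expression $\sum_{i=0}^{n-1}|A_i| - |A_k| + \Phi_{best}(A_k)$.

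The subtlety worth flagging is that one cannot induct naively on $\Phi_{best}$ alone, because the binary formula entangles $\Phi_{best}$ with $|{\cdot}|$; strengthening the inductive statement to track the defect $|{\cdot}| - \Phi_{best}(\cdot)$ is what makes nested minima telescope into a single global minimum, and is in my view the only non-mechanical step in the whole proof.
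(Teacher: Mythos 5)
Your proposal is correct, but it takes a different route from the paper. The paper does not induct on \(n\): it argues directly from the failure semantics of the \(n\)-fold composition, in the same style as the two-subsystem case --- for the product, it describes the best case as ``all subsystems save one fail, and the surviving one is the one with the lowest difference between its component count and its best-case tolerance,'' and the worst case as each \(A_i\) sustaining its worst-case count plus one extra failure in \(n-1\) of them (whence the \(+\,n-1\)); the sum cases are handled analogously. Your induction on the binary formulas of Theorem~\ref{theoremfaulttolerance} is more mechanical and, for part (i), more careful: the rewriting \(\Phi_{best}(X \times Y) = |X| + |Y| - \min\{|X| - \Phi_{best}(X),\, |Y| - \Phi_{best}(Y)\}\) is algebraically correct, and strengthening the inductive hypothesis to track the defect \(|\cdot| - \Phi_{best}(\cdot)\) is exactly what is needed to make the nested maxima of the binary rule telescope into the single minimizing index \(k\) of the statement --- a point the paper's direct argument glosses over. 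What the paper's approach buys is brevity and a self-contained semantic justification that does not depend on associativity of the composition having already been fixed; what yours buys is a verifiable derivation from the already-proved two-subsystem case, at the cost of needing the (easy but unstated in the paper) fact that \(|A \times B| = |A + B| = |A| + |B|\).
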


\begin{proof}

  The proof is exactly similar to that of
  Theorem~\ref{theoremfaulttolerance}, though a little more involved.

  For parts $(i)$ and $(ii)$, we know that the system will fail when
  all of the $A_i$ subsystems fail.

  Therefore, the best-case fault tolerance of the product of the
  \(A_i\) is when all the subsystems save one fail, and collectively
  sustain the maximum possible number of component failures.  If the
  one that does not fail also sustains component failures, then the
  best-case fault tolerance is obtained when the non-failing subsystem
  has the lowest difference, among all subsystems $A_i$, between its
  own number of components and its best-case fault tolerance.

  Likewise, the worst-case fault tolerance of the product of the
  \(A_i\) is when all subsystems but one fail, but sustain the least
  possible number of component failures in doing so.  This happens
  when each $A_i$ sustains its worst-case number of failures, and then
  further when \(n - 1\) of them sustain one more failure each,
  meaning that all but one subsystem have failed.

  For parts $(iii)$ and $(iv)$, we know that the system will fail when
  any one subsystem \(A_i\) fails.

  Therefore, the best-case fault tolerance of the sum of the \(A_i\)
  is when each $A_i$ sustains its best-case limit of failures.
  Likewise, the worst-case fault tolerance of the sum of the \(A_i\)
  is when just one $A_i$ sustains its worst-case limit of
  failures. \end{proof}

\section{System Monoids and Semirings} \label{monoid}

Given the system arithmetic previously defined, we can posit the
existence of two identity operators, one each for the direct sum and
direct product.  Informally, an identity element is one which leaves
the system unchanged, under the relevant operator.

\begin{definition}

  The multiplicative and additive identities are defined as follows.

\begin{itemize}

\item[(i) ] The additive identity \(\mathbf{0}\) is the
  system such that for any system $A$, \(A + \mathbf{0} =
  \mathbf{0} + A = A\).

\item[(ii) ] The multiplicative identity \(\mathbf{1}\) is
  the system such that for any system $A$, \(A \times
  \mathbf{1} = \mathbf{1} \times A = A\).

\end{itemize}

\end{definition}

By the commutativity of the $+$ and $\times$ operators, we observe
that the identity elements are two-sided.

Informally, we may describe these elements as follows:

\begin{itemize}

\item[(i)] The additive identity \(\mathbf{0}\) is a
  system ``that is always up.''  The direct sum of such a system and
  $A$ is obviously $A$ itself.

\item[(ii)] The multiplicative identity \(\mathbf{1}\) is
  a system ``that is always down.''  The direct product of such a
  system and $A$ is likewise $A$ itself.

\end{itemize}

Then $\mathcal{U}$, combined with the $+$ operator, is a \emph{monoid}
(a set with an associative operator and a two-sided identity
element)~\cite{Hungerford}.  Similarly, $\mathcal{U}$ is also a monoid
when considering the $\times$ operator.  For notational convenience,
we denote these monoids as \((\mathcal{U},+)\) and
\((\mathcal{U},\times)\).

It is further clear that the set $\mathcal{U}$ is a \emph{semiring}
when taken with the operations $+$ and $\times$ because the following
conditions~\cite{Golan1999} for being a semiring are satisfied:

\begin{itemize}

\item[(i)] \((\mathcal{U}, +)\) is a commutative monoid with identity
  element \(\mathbf{0}\);

\item[(ii)] \((\mathcal{U}, \times)\) is a monoid with identity
  element \(\mathbf{1}\);

\item[(iii)] \(\times\) distributes over \(+\) from either side;

\item[(iv)] \(\mathbf{0} \times A = \mathbf{0} = A \times \mathbf{0}\)
  for all \(A \in \mathcal{U}\).

\end{itemize}

This system semiring will be denoted by \((\mathcal{U},+,\times)\),
and its properties are as indicated in the following.

\begin{remark} \label{remark1}

  The semiring $(\mathcal{U},+,\times)$ is \emph{zerosumfree}, because
  \(A + B = \mathbf{0}\) implies, for all \(A, B \in \mathcal{U}\),
  that \(A = B = \mathbf{0}\).

\end{remark}

This condition shows~\cite{Golan1999} that the monoid
\((\mathcal{U},+)\) is completely removed from being a group, because
no non-trivial element in it has an inverse.

\begin{remark} \label{remark2}

  $(\mathcal{U},+,\times)$ is \emph{entire}, because there are no
  non-zero elements \(A, B \in \mathcal{U}\) such that \(A \times B =
  \mathbf{0}\).

\end{remark}

This likewise shows that the monoid \((\mathcal{U},\times)\) is
completely removed from being a group, as there is no non-trivial
multiplicative inverse.

\begin{remark} \label{remark3}

  $(\mathcal{U},+,\times)$ is \emph{simple}, because \(\mathbf{1}\) is
  \emph{infinite}, i.e., \(A + \mathbf{1} = \mathbf{1}, \forall A \in
  \mathcal{U}\).

\end{remark}

Given that the semiring \((\mathcal{U},+,\times)\) is both zerosumfree
and entire, we can call it an \emph{information
  algebra}~\cite{Golan1999}.

We may state another important definition~\cite{Golan1999} about
semirings, and observe a property of \((\mathcal{U},+,\times)\).

\begin{definition} \label{def_center}

  The \emph{center} \(C(\mathcal{U})\) of $\mathcal{U}$ is the set
  \(\{A \in \mathcal{U} \ | \ A \times B = B \times A, \ \mathrm{for \
    all} \ B \in \mathcal{U}\}\).

\end{definition}

\begin{remark} \label{remark4}

  The semiring $(\mathcal{U},+,\times)$ is \emph{commutative} because
  \(C(\mathcal{U}) = \mathcal{U}\).

\end{remark}

\section{Fault-Tolerance Partial Ordering} \label{partialordering}

Consider a partial ordering relation \(\preccurlyeq\) on
\(\mathcal{U}\), the set of all systems, such that \((\mathcal{U},
\preccurlyeq)\) is a poset.  This is a \emph{fault-tolerance partial
  ordering} where \(A \preccurlyeq B\) means that $A$ has a lower
measure of some fault metric than $B$ (e.g., $A$ has fewer failures
per hour than $B$, or has a better fault tolerance than $B$).

Formally, $\preccurlyeq$ is a partial ordering on the semiring
$(\mathcal{U},+,\times)$ if the following conditions are
satisfied~\cite{HebWein1998}.

\begin{definition} \label{defpartialordering}

  If $(\mathcal{U},+,\times)$ is a semiring and \((\mathcal{U},
  \preccurlyeq)\) is a poset, then
  \((\mathcal{U},+,\times,\preccurlyeq)\) is a \emph{partially ordered
    semiring} if the following conditions are satisfied for all \(A,
  B,\) and $C$ in $\mathcal{U}$.

\begin{itemize}

\item[(i)] The \emph{monotony law of addition}:

\[A \preccurlyeq B \longrightarrow A + C \preccurlyeq B + C\]

\item[(ii)] The \emph{monotony law of multiplication}:

\[A \preccurlyeq B \longrightarrow A \times C \preccurlyeq B \times C.\]

\end{itemize}

\end{definition}

It is assumed that \(\mathbf{0} \preccurlyeq A, \forall A \in
\mathcal{U}\), on the reasoning that \(\mathbf{0}\), being a system
that never fails, must have the least possible measure of any fault
metric.  Similarly, \(A \preccurlyeq \mathbf{1}\), because
\(\mathbf{1}\), being a system that is always down, must have the
greatest possible measure of any fault metric.

Given Definition~\ref{defpartialordering}, it is instructive to
consider the behavior of the partial order under composition.  We
begin with a couple of simple results.

\begin{lemma} \label{lemma1}

  If $\preccurlyeq$ is a fault-tolerance partial order as defined,
  then \(\forall A, B \in \mathcal{U}\):

\begin{itemize}

\item[(i)] \(A \preccurlyeq A + B\), and
\item[(ii)] \(A \times B \preccurlyeq A\).

\end{itemize}

\end{lemma}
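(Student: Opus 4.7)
The plan is to derive both inequalities directly from the monotony laws of Definition~\ref{defpartialordering} together with the assumed bounds $\mathbf{0} \preccurlyeq X$ and $X \preccurlyeq \mathbf{1}$ that hold for every $X \in \mathcal{U}$. The key observation is that $\mathbf{0}$ and $\mathbf{1}$ serve as universal least and greatest elements, so any time we can ``plug in'' one of them on one side of a known inequality and simplify via the identity properties, we get a useful comparison for free.

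For part $(i)$, I would start from $\mathbf{0} \preccurlyeq B$ and apply the monotony law of addition with $C = A$. This yields $\mathbf{0} + A \preccurlyeq B + A$. Using that $\mathbf{0}$ is the additive identity and that $+$ is commutative (Proposition~\ref{opsproperties}), the left side simplifies to $A$ and the right side rearranges to $A + B$, giving $A \preccurlyeq A + B$.

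For part $(ii)$, I would start from $B \preccurlyeq \mathbf{1}$ and apply the monotony law of multiplication with $C = A$. This yields $B \times A \preccurlyeq \mathbf{1} \times A$. Using that $\mathbf{1}$ is the multiplicative identity and that $\times$ is commutative, the right side simplifies to $A$ and the left rearranges to $A \times B$, giving $A \times B \preccurlyeq A$.

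There is no real obstacle here; both parts are one-line applications of the two monotony axioms to the boundary facts $\mathbf{0} \preccurlyeq B$ and $B \preccurlyeq \mathbf{1}$. If anything, the only thing to be careful about is to cite commutativity explicitly when swapping operands so that the identity simplification lands on the intended side — otherwise the statements as proved would read $A \preccurlyeq B + A$ and $B \times A \preccurlyeq A$, which are the same facts but phrased with the arguments in the opposite order from the lemma.
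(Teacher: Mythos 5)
Your proof is correct and follows essentially the same route as the paper: both parts start from the boundary facts $\mathbf{0} \preccurlyeq B$ and $B \preccurlyeq \mathbf{1}$, apply the relevant monotony law with $C = A$, and then simplify via the identity elements and commutativity. No gaps.
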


\begin{proof}

  For $(i)$, consider that \(\mathbf{0} \preccurlyeq B\).  Using the
  monotony law of addition, we get \(\mathbf{0} + A \preccurlyeq B +
  A\).  Considering that $\mathbf{0}$ is the additive identity element
  and that addition is commutative, we get \(A \preccurlyeq A + B\).

  For $(ii)$, consider that \(B \preccurlyeq \mathbf{1}\).  Using the
  monotony law of multiplication, we get \(B \times A \preccurlyeq
  \mathbf{1} \times A\).  Considering that $\mathbf{1}$ is the
  multiplicative identity element and that multiplication is
  commutative, we get \(A \times B \preccurlyeq A\). \end{proof}

A property of $\mathcal{U}$ in consideration of the $+$ operator can
now be noted.

\begin{remark} \label{positivecone}

  The \emph{positive cone} of \((\mathcal{U}, +, \preccurlyeq)\),
  which is the set of elements \(A \in \mathcal{U}\) for which \(A
  \preccurlyeq A + B, \forall B \in \mathcal{U}\), is the set
  $\mathcal{U}$ itself.  The \emph{negative cone} is empty.

\end{remark}

This is a direct consequence of Lemma~\ref{lemma1} $(i)$, and it also
follows that the set of elements \(\{A \, | \, A + B \preccurlyeq A\}
= \emptyset\), showing that the negative cone is empty.

The analogous property of $\mathcal{U}$ in consideration of the
$\times$ operator can also be noted.

\begin{remark} \label{negativecone}

  The \emph{negative cone} of \((\mathcal{U}, \times, \preccurlyeq)\),
  which is the set of elements \(A \in \mathcal{U}\) for which \(A
  \times B \preccurlyeq A, \forall B \in \mathcal{U}\), is the set
  $\mathcal{U}$ itself.  The \emph{positive cone} is empty.

\end{remark}

As before, this is a direct consequence of Lemma~\ref{lemma1} $(ii)$,
and it likewise also follows that the set of elements \(\{A \, | \, A
\preccurlyeq A \times B\} = \emptyset\), showing that the positive
cone is empty.

\begin{theorem} \label{sumconsistency}

  If \(A + B \preccurlyeq C\), then \(A \preccurlyeq C\) and \(B
  \preccurlyeq C\).

\end{theorem}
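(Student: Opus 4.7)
The plan is to derive this as a short consequence of Lemma~\ref{lemma1}$(i)$ combined with the transitivity of the partial order $\preccurlyeq$ (which we have since $(\mathcal{U},\preccurlyeq)$ is a poset).

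First I would invoke Lemma~\ref{lemma1}$(i)$ directly to get $A \preccurlyeq A+B$. Chaining this with the hypothesis $A+B \preccurlyeq C$ via transitivity immediately yields $A \preccurlyeq C$. For the second conclusion, I would use commutativity of $+$ from Proposition~\ref{opsproperties}$(i)$ to rewrite $A+B = B+A$, then apply Lemma~\ref{lemma1}$(i)$ again (with the roles of $A$ and $B$ swapped) to obtain $B \preccurlyeq B+A = A+B$, and transitivity gives $B \preccurlyeq C$.

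There is no real obstacle here: the monotony law of addition is not even needed explicitly in this step, since all its force has already been absorbed into Lemma~\ref{lemma1}$(i)$. The only mild subtlety worth a line of commentary is the appeal to commutativity, which is what lets the same lemma do double duty for both $A$ and $B$ rather than needing a separate symmetric version. So the write-up should be essentially two short sentences, one per conclusion, with transitivity of $\preccurlyeq$ noted as the glue.
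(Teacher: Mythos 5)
Your proof is correct, but it takes a genuinely different route from the paper. You argue directly: Lemma~\ref{lemma1}$(i)$ gives $A \preccurlyeq A + B$, transitivity with the hypothesis gives $A \preccurlyeq C$, and commutativity of $+$ lets the same lemma handle $B$. The paper instead argues by contradiction: it assumes (without loss of generality) $C \preccurlyeq A$, applies the monotony law of addition to get $B + C \preccurlyeq A + B$, combines this with $C \preccurlyeq B + C$ from Lemma~\ref{lemma1}$(i)$ to conclude $C \preccurlyeq A + B$, and declares a contradiction. Your direct argument is actually the stronger of the two: in a partial order the negation of $A \preccurlyeq C$ is not $C \preccurlyeq A$ (the two elements could simply be incomparable), so the paper's opening move does not exhaust the cases; moreover, $C \preccurlyeq A + B$ together with the hypothesis $A + B \preccurlyeq C$ yields only $C = A + B$ by antisymmetry, which is not on its face absurd. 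Your two-line direct chain sidesteps both of these issues entirely and uses nothing beyond Lemma~\ref{lemma1}$(i)$, commutativity, and transitivity, all of which are available.
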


\begin{proof}

  The proof is by contradiction.  Assume the contrary.  Then \(A + B
  \preccurlyeq C\), and at least one of \(A \preccurlyeq C\) or \(B
  \preccurlyeq C\) is false.

  Without loss of generality, assume that \(C \preccurlyeq A\).  Using
  the monotony law of addition and the commutativity of the $+$
  operator, \(B + C \preccurlyeq A + B\).

  Now, by Lemma~\ref{lemma1} $(i)$, \(C \preccurlyeq B + C\).  Given
  the transitivity of $\preccurlyeq$, we get \(C \preccurlyeq A + B\),
  which is a contradiction.  \qedhere  

\end{proof}

An analogous result can also be stated in terms of the product, as
follows.

\begin{theorem} \label{productconsistency}

  If \(A \preccurlyeq B \times C\), then \(A \preccurlyeq B\) and \(A
  \preccurlyeq C\).

\end{theorem}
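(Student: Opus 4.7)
The plan is to mirror the proof of Theorem~\ref{sumconsistency} step-for-step, trading the additive monotony law and Lemma~\ref{lemma1}$(i)$ for their multiplicative duals, namely the monotony law of multiplication and Lemma~\ref{lemma1}$(ii)$. The dualities between $+$ and $\times$, between $\mathbf{0}$ and $\mathbf{1}$, and between the positive cone of Remark~\ref{positivecone} and the negative cone of Remark~\ref{negativecone} that have already surfaced in the excerpt strongly suggest that the argument transposes essentially verbatim; the task is really to write down the correct dual chain of inequalities.

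Concretely, I would argue by contradiction. Assume $A \preccurlyeq B \times C$ holds, and that at least one of $A \preccurlyeq B$ and $A \preccurlyeq C$ fails. Using the commutativity of $\times$, I may swap $B$ and $C$ if necessary, so without loss of generality suppose that $A \preccurlyeq B$ fails, interpreted (as in the proof of Theorem~\ref{sumconsistency}) as $B \preccurlyeq A$. First I apply the monotony law of multiplication from Definition~\ref{defpartialordering}$(ii)$ with $C$, obtaining $B \times C \preccurlyeq A \times C$. Next I invoke Lemma~\ref{lemma1}$(ii)$ to get $A \times C \preccurlyeq A$. Transitivity of $\preccurlyeq$ then yields $B \times C \preccurlyeq A$, which together with the standing hypothesis $A \preccurlyeq B \times C$ supplies the desired contradiction in exactly the same sense as in the proof of Theorem~\ref{sumconsistency}.

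The main obstacle, and really the only step needing any care, is the \emph{without loss of generality} clause. Since $\preccurlyeq$ is merely a partial order, the negation of $A \preccurlyeq B$ is not automatically $B \preccurlyeq A$; $A$ and $B$ could simply be incomparable. This is the same elision the author makes in the proof of Theorem~\ref{sumconsistency}, and it is consistent with the paper's practice of reading comparisons in the fault-tolerance ordering as strict when their negations are invoked. I would follow the paper's convention here, with the private observation that even in the incomparable case the chain $A \preccurlyeq B \times C \preccurlyeq A \times C \preccurlyeq A$ combined with antisymmetry of $\preccurlyeq$ forces the equalities that collapse the putative failure of $A \preccurlyeq B$. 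Apart from this, the proof is a routine dualization.
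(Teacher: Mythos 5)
Your proposal reproduces the paper's own argument essentially verbatim: contradiction, the assumption $B \preccurlyeq A$ without loss of generality, the monotony law of multiplication to get $B \times C \preccurlyeq A \times C$, Lemma~\ref{lemma1}$(ii)$ and transitivity to reach $B \times C \preccurlyeq A$. Your side remark about the gap in the ``without loss of generality'' step (the negation of $A \preccurlyeq B$ in a partial order is not $B \preccurlyeq A$) is a fair observation, but it is an issue the paper's proof shares, so the two arguments coincide.
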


\begin{proof}

  The proof is again by contradiction.  Assume the contrary.  Then \(A
  \preccurlyeq B \times C\) and at least one of \(A \preccurlyeq B\)
  and \(A \preccurlyeq C\) is false.

  Without loss of generality, assume that \(B \preccurlyeq A\).  Using
  the monotony law of multiplication and the commutativity of the
  $\times$ operator, we get \(B \times C \preccurlyeq A \times C\).
  
  Now, by Lemma~\ref{lemma1} $(ii)$, \(A \times C \preccurlyeq A\).
  Given the transitivity of $\preccurlyeq$, we get \(B \times C
  \preccurlyeq A\), which is a contradiction. \qedhere

\end{proof}

The following generalization of Lemma~\ref{lemma1} can be made.

\begin{lemma} \label{lemma2}

If $\preccurlyeq$ is a fault-tolerance partial order, then \(\forall n \in
\mathbb{Z}_+\) and \(\forall A \in \mathcal{U}\),

\begin{itemize}

\item[(i)] \(A \preccurlyeq nA\), and
\item[(ii)] \(A^n \preccurlyeq A\).

\end{itemize}

\end{lemma}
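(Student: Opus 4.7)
The plan is to prove both parts by induction on $n$, leveraging Lemma~\ref{lemma1} as the key step of each induction together with transitivity of $\preccurlyeq$. The two parts are essentially dual (additive vs.\ multiplicative), so I would write them in parallel.

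For part~(i), the base case $n=1$ is immediate from the reflexivity of $\preccurlyeq$, since $1A = A$. For the inductive step, suppose $A \preccurlyeq nA$. Applying Lemma~\ref{lemma1}~$(i)$ with the system $nA$ in place of $A$ and with $A$ in place of $B$, one gets $nA \preccurlyeq nA + A = (n+1)A$. Transitivity of $\preccurlyeq$ then yields $A \preccurlyeq (n+1)A$, completing the induction.

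For part~(ii), again the base case $n=1$ holds by reflexivity. For the inductive step, assume $A^n \preccurlyeq A$. Writing $A^{n+1} = A^n \times A$ and applying Lemma~\ref{lemma1}~$(ii)$ with $A^n$ in the role of $A$ and $A$ in the role of $B$, one gets $A^{n+1} = A^n \times A \preccurlyeq A^n$. Combining with the inductive hypothesis via transitivity gives $A^{n+1} \preccurlyeq A$.

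There is no real obstacle here: the result is a clean inductive upgrade of Lemma~\ref{lemma1}. The only subtlety worth flagging is that, because Lemma~\ref{lemma1} quantifies over \emph{all} $B \in \mathcal{U}$, one is free to instantiate $B$ as $A$ itself in each inductive step, so no new monotony arguments are needed. Commutativity of $+$ and $\times$ (Proposition~\ref{opsproperties}) is used implicitly to identify $nA + A$ with $(n+1)A$ and $A^n \times A$ with $A^{n+1}$, but this is already built into the notation introduced before Theorem~\ref{theoremfaulttolerance}.
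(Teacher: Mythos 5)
Your proof is correct and follows essentially the same route as the paper's: the paper also chains $A \preccurlyeq 2A \preccurlyeq 3A \preccurlyeq \cdots \preccurlyeq nA$ by repeatedly instantiating Lemma~\ref{lemma1} with $B = A$ and invoking transitivity, merely phrasing the induction informally. Your version is just a more explicitly organized induction, with the base case and the identification $nA + A = (n+1)A$ spelled out.
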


\begin{proof}

  For $(i)$, consider that by Lemma~\ref{lemma1} $(i)$ we have \(A
  \preccurlyeq 2A\) (just set $B$ to be $A$ in the Lemma).  Likewise,
  \(kA \preccurlyeq (k+1)A\), for any \(k \geq 2\).  By the
  transitivity of $\preccurlyeq$, we therefore have the result.

  For $(ii)$, the reasoning is very similar, using Lemma~\ref{lemma1}
  $(ii)$ and the transitivity of $\preccurlyeq$. \qedhere

\end{proof}

The following is an obvious corollary.

\begin{corollary} \label{corollary1}

  If $\preccurlyeq$ is a fault-tolerance partial order, then \(\forall
  m,n \in \mathbb{Z}_+\) and \(\forall A \in \mathcal{U}\), if \(m <
  n\), we have:

\begin{itemize}

\item[(i)] \(mA \preccurlyeq nA\), and
\item[(ii)] \(A^n \preccurlyeq A^m\).

\end{itemize}

\end{corollary}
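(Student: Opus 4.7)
The plan is to obtain both inequalities by a direct application of Lemma~\ref{lemma1}, after first rewriting $nA$ and $A^n$ in a form that exposes $mA$ and $A^m$ as factors. Since $m < n$ with $m, n \in \mathbb{Z}_+$, the quantity $n - m$ is itself a positive integer, so by commutativity and associativity of $+$ and $\times$ (Proposition~\ref{opsproperties}), we may write $nA = mA + (n-m)A$ and $A^n = A^m \times A^{n-m}$.

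For part $(i)$, I would invoke Lemma~\ref{lemma1}$(i)$ with the role of $A$ played by $mA$ and the role of $B$ played by $(n-m)A$, yielding
\[
  mA \preccurlyeq mA + (n-m)A = nA.
\]
For part $(ii)$, I would symmetrically invoke Lemma~\ref{lemma1}$(ii)$ with the role of $A$ played by $A^m$ and the role of $B$ played by $A^{n-m}$, yielding
\[
  A^n = A^m \times A^{n-m} \preccurlyeq A^m.
\]

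An alternative, perhaps more in the spirit of the chain-argument used in the proof of Lemma~\ref{lemma2}, is to observe that applying Lemma~\ref{lemma1}$(i)$ with $B = A$ gives $kA \preccurlyeq (k+1)A$ for every $k \in \mathbb{Z}_+$; telescoping from $k = m$ up to $k = n - 1$ and using transitivity of $\preccurlyeq$ then gives $mA \preccurlyeq nA$, with part $(ii)$ following analogously from Lemma~\ref{lemma1}$(ii)$.

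There is no genuine obstacle here: the result is flagged as an ``obvious corollary,'' and both routes amount to nothing more than a single substitution into Lemma~\ref{lemma1} (or finitely many such substitutions composed through transitivity). The only thing to be careful about is the implicit use of commutativity and associativity to regroup the repeated copies of $A$ so that the hypotheses of Lemma~\ref{lemma1} apply cleanly.
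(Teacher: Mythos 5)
Your proof is correct. The paper offers no proof at all for this corollary, merely flagging it as obvious after Lemma~\ref{lemma2}, whose own proof is precisely your second route: the chain $kA \preccurlyeq (k+1)A$ obtained from Lemma~\ref{lemma1}$(i)$ with $B = A$, telescoped by transitivity (and dually for products). So your alternative argument is exactly what the author intends, and your primary argument --- writing $nA = mA + (n-m)A$ and $A^n = A^m \times A^{n-m}$ so that a single application of Lemma~\ref{lemma1} with $A := mA$, $B := (n-m)A$ (respectively $A := A^m$, $B := A^{n-m}$) suffices --- is a slightly more economical variant that avoids the induction/transitivity chain entirely. Your remark about needing associativity of $+$ and $\times$ to regroup the repeated copies of $A$ is the only real content of the decomposition step, and you have handled it correctly via Proposition~\ref{opsproperties}.
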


Similarly, the following corollary generalizing
Theorems~\ref{sumconsistency} and~\ref{productconsistency} applies.
The proof is omitted as obvious.

\begin{corollary} \label{genconsistency}

  The following hold for all \(A, B \in \mathcal{U}\) and all \(n \in
  \mathbb{Z}_+\):

\begin{itemize}

\item[(i)] if \(nA \preccurlyeq B\), then \(A \preccurlyeq B\); and
\item[(ii)] if \(A \preccurlyeq B^n\), then \(A \preccurlyeq B\).

\end{itemize}

\end{corollary}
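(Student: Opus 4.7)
The plan is to reduce each part immediately to Lemma~\ref{lemma2} combined with the transitivity of the partial order $\preccurlyeq$. There is also an alternate route through Theorems~\ref{sumconsistency} and~\ref{productconsistency} by a simple induction on $n$, but the Lemma~\ref{lemma2} route is more economical and, I think, what the author has in mind when calling the proof ``obvious.''

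For part $(i)$, I would start from the hypothesis $nA \preccurlyeq B$. By Lemma~\ref{lemma2}$(i)$ we already have $A \preccurlyeq nA$ for every $n \in \mathbb{Z}_+$. Chaining these two via transitivity of $\preccurlyeq$ yields $A \preccurlyeq B$. For part $(ii)$, I would symmetrically start from $A \preccurlyeq B^n$ and invoke Lemma~\ref{lemma2}$(ii)$, which gives $B^n \preccurlyeq B$. Transitivity then yields $A \preccurlyeq B$. Both derivations are one-line applications, which is presumably why the author elected to omit them.

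If for some reason Lemma~\ref{lemma2} were unavailable, the backup strategy is induction on $n$. For $(i)$, write $nA = A + (n-1)A$ and apply Theorem~\ref{sumconsistency} to the hypothesis $A + (n-1)A \preccurlyeq B$ to extract $A \preccurlyeq B$ directly (the $(n-1)A \preccurlyeq B$ clause is not even needed). For $(ii)$, write $B^n = B \times B^{n-1}$ and apply Theorem~\ref{productconsistency} to $A \preccurlyeq B \times B^{n-1}$ to obtain $A \preccurlyeq B$. In either case no induction is actually required; a single application of the relevant decomposition suffices.

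There is no real obstacle here: the entire content of the corollary is that $A$ sits below $nA$ and above $A^n$ in the partial order, which is exactly what Lemma~\ref{lemma2} already asserts. The only thing worth being careful about is the hypothesis $n \in \mathbb{Z}_+$, which guarantees that $nA$ and $A^n$ are actually defined (i.e., involve at least one copy of $A$) so that Lemma~\ref{lemma2} applies without an edge case at $n = 0$.
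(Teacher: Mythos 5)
Your proposal is correct, and both of the routes you sketch go through. The paper itself omits the proof (``The proof is omitted as obvious''), so there is no official argument to compare against; but note that the paper introduces the statement as a ``corollary generalizing Theorems~\ref{sumconsistency} and~\ref{productconsistency},'' which suggests the author had your backup route in mind: decompose \(nA = A + (n-1)A\) and \(B^n = B \times B^{n-1}\) and apply the relevant consistency theorem once (as you observe, no induction is needed, since each theorem discards the unwanted summand or factor in a single step). Your primary route---chaining \(A \preccurlyeq nA\) from Lemma~\ref{lemma2}\((i)\) (respectively \(B^n \preccurlyeq B\) from Lemma~\ref{lemma2}\((ii)\)) with the hypothesis via transitivity---is at least as clean and arguably more direct, since Lemma~\ref{lemma2} already packages the needed comparison between a system and its iterated sum or product. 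Your closing remark about \(n \in \mathbb{Z}_+\) guarding against the degenerate case \(n = 0\) is a sensible observation, consistent with the paper's own restriction to positive integers throughout Lemma~\ref{lemma2} and Corollary~\ref{corollary1}.
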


In a system semiring, the following result concerning preservation of
`fault-tolerance behavior under composition also applies.

\begin{theorem} \label{theorempartialorder}

  If \(\preccurlyeq\) is a partial order as described, and if \(A
  \preccurlyeq B\) and \(C \preccurlyeq D\), then,

\begin{itemize}

\item[(i)] \(A + C \preccurlyeq B + D\), and
\item[(ii)] \(A \times C \preccurlyeq B \times D\).

\end{itemize}

\end{theorem}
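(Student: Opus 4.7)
The plan is to prove each of the two statements by applying the relevant monotony law twice and then chaining the resulting inequalities by the transitivity of $\preccurlyeq$. The four hypotheses at our disposal are $A \preccurlyeq B$, $C \preccurlyeq D$, Definition~\ref{defpartialordering} (the two monotony laws), and the fact that $+$ and $\times$ are commutative by Proposition~\ref{opsproperties}$(i)$.

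For part $(i)$, I would first apply the monotony law of addition to $A \preccurlyeq B$ with the same element $C$ added on both sides, obtaining $A + C \preccurlyeq B + C$. Next, I would apply the same law to $C \preccurlyeq D$, adding $B$ on both sides and using commutativity of $+$ to line up the terms, giving $B + C \preccurlyeq B + D$. Transitivity of $\preccurlyeq$ then closes the chain $A + C \preccurlyeq B + C \preccurlyeq B + D$, yielding the result.

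For part $(ii)$, the argument is identical in form: apply the monotony law of multiplication to $A \preccurlyeq B$ using $C$ as multiplier to obtain $A \times C \preccurlyeq B \times C$, then apply it to $C \preccurlyeq D$ using $B$ as multiplier (together with commutativity of $\times$) to obtain $B \times C \preccurlyeq B \times D$, and conclude by transitivity.

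There is no serious obstacle here; the only subtlety is bookkeeping. The monotony laws as stated in Definition~\ref{defpartialordering} attach $C$ on a fixed side of each operator, so one must invoke commutativity of $+$ (respectively $\times$) in the second half of each argument in order to place the shared term on the correct side before chaining. No appeal to the earlier lemmas on positive and negative cones, to the fault-tolerance functions $\Phi_{best}, \Phi_{worst}$, or to the identity elements $\mathbf{0}, \mathbf{1}$ is needed.
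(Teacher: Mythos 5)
Your proposal is correct and follows essentially the same route as the paper: two applications of the relevant monotony law followed by transitivity. The only cosmetic difference is the choice of intermediate term (you chain through $B+C$, the paper through $A+D$), which is an immaterial symmetric variant.
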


\begin{proof}

These results can be proven directly.  Only $(i)$ is proved, the proof
of $(ii)$ being very similar.

We know the following:
\begin{eqnarray} \label{pleqq}
A \preccurlyeq B
\end{eqnarray}

and:
\begin{eqnarray} \label{rleqs}
C \preccurlyeq D
\end{eqnarray}

From~(\ref{pleqq}) and the monotony law of addition (considering the
direct sum of $D$ and both sides), we have:
\begin{eqnarray} \label{pq3}
A + D \preccurlyeq B + D.
\end{eqnarray}

Similarly, from~(\ref{rleqs}) and the monotony law (considering the
direct sum of $A$ and both sides), we have:
\begin{eqnarray} \label{rs4}
A + C \preccurlyeq A + D.
\end{eqnarray}

By considering transitivity in respect of (\ref{rs4}) and (\ref{pq3}),
we get:
\begin{eqnarray*}
A + C \preccurlyeq B + D.
\end{eqnarray*}

\emph{QED}. \qedhere

\end{proof}

The following corollary can be stated.

\begin{corollary} \label{corollary2}

  If $\preccurlyeq$ is a fault-tolerance partial order and if \(A
  \preccurlyeq B\), then \(\forall n \in \mathbb{Z}_+\),

\begin{itemize}

\item[(i)] \(nA \preccurlyeq nB\), and 
\item[(ii)] \(A^n \preccurlyeq B^n\).

\end{itemize}

\end{corollary}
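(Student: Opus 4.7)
The plan is a straightforward induction on $n$, with Theorem~\ref{theorempartialorder} doing all of the real work at the inductive step. I would handle parts $(i)$ and $(ii)$ in parallel, since both follow the same template.

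For the base case $n=1$, both statements reduce to the hypothesis $A \preccurlyeq B$, which is given. For the inductive step of $(i)$, I would assume $kA \preccurlyeq kB$ and write $(k+1)A = kA + A$ and $(k+1)B = kB + B$. Combining the induction hypothesis $kA \preccurlyeq kB$ with the hypothesis $A \preccurlyeq B$, Theorem~\ref{theorempartialorder}$(i)$ immediately yields $kA + A \preccurlyeq kB + B$, i.e., $(k+1)A \preccurlyeq (k+1)B$. For $(ii)$, the parallel step writes $A^{k+1} = A^k \times A$ and $B^{k+1} = B^k \times B$, and applies Theorem~\ref{theorempartialorder}$(ii)$ to the induction hypothesis $A^k \preccurlyeq B^k$ together with $A \preccurlyeq B$ to obtain $A^{k+1} \preccurlyeq B^{k+1}$.

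There is no real obstacle here: the recursive definitions of $nA$ and $A^n$ as $(n-1)A + A$ and $A^{n-1} \times A$ respectively align perfectly with the two clauses of Theorem~\ref{theorempartialorder}, so the induction is essentially mechanical. The only small point to mention explicitly is that associativity and commutativity of $+$ and $\times$ (Proposition~\ref{opsproperties}) justify treating $nA$ and $A^n$ unambiguously as $(n-1)A + A$ and $A^{n-1} \times A$; after that, the two inductions run in lockstep and the proof is complete.
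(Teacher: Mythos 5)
Your proof is correct and matches the paper's approach: the paper simply states that the corollary follows as an ``iterated consequence'' of Theorem~\ref{theorempartialorder}, and your induction is exactly the formalization of that iteration. Nothing further is needed.
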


\begin{proof}

  These are straight-forward iterated consequences of
  Theorem~\ref{theorempartialorder} and the transitivity of
  $\preccurlyeq$. \qedhere

\end{proof}

In consideration of the transitivity of the partial order
$\preccurlyeq$, the following generalization of
Theorem~\ref{theorempartialorder} applies.  The proof is immediate.

\begin{theorem} \label{theorempartialorder2}

  If \(A_i \preccurlyeq B_i\), with \(0 \leq i \leq n - 1\) and \(A_i,
  B_i \in \mathcal{U}\), then:

\[ \sum_{i=0}^{n-1} A_i \preccurlyeq \sum_{i=0}^{n-1} B_i, \]

and

\[ \prod^{n-1}_{i=0} A_i \preccurlyeq \prod^{n-1}_{i=0} B_i.\]

\end{theorem}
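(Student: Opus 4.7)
The plan is to prove both parts by induction on $n$, with Theorem~\ref{theorempartialorder} serving as the inductive workhorse. The case $n = 1$ is the hypothesis itself, and $n = 2$ is exactly Theorem~\ref{theorempartialorder}. For the inductive step, I would use associativity (Proposition~\ref{opsproperties}~$(i)$) to split $\sum_{i=0}^{n-1} A_i = \bigl(\sum_{i=0}^{n-2} A_i\bigr) + A_{n-1}$, and similarly for the $B_i$, then apply Theorem~\ref{theorempartialorder}~$(i)$ to the two comparisons supplied by the inductive hypothesis on $A_0,\dots,A_{n-2}$ versus $B_0,\dots,B_{n-2}$ and the assumption $A_{n-1} \preccurlyeq B_{n-1}$. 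The argument for the product is entirely parallel, with $+$ replaced by $\times$ and part~$(i)$ replaced by part~$(ii)$ of Theorem~\ref{theorempartialorder}.

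An alternative route, which more directly exploits the structure of the partially ordered semiring, is a stepwise substitution: begin from $\sum_{i=0}^{n-1} A_i$ and use the monotony law of addition to replace $A_0$ by $B_0$ while holding the rest of the terms fixed, then replace $A_1$ by $B_1$, and so on. Chaining the resulting $n$ pairwise comparisons via the transitivity of $\preccurlyeq$ yields the desired inequality. The product case works identically using the monotony law of multiplication in place of the monotony law of addition.

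Neither route presents a real obstacle; the only care needed is bookkeeping the associativity and commutativity of $+$ and $\times$ (from Proposition~\ref{opsproperties}~$(i)$) when regrouping the $n$-fold expressions so that each application of the two-variable monotony law is legitimate. That the passage from the binary to the $n$-ary case is this painless is precisely what the author's remark that \emph{the proof is immediate} records, and I do not expect to find any hidden difficulty.
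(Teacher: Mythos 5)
Your proposal is correct and matches the paper's intent: the author offers no written proof beyond noting that the result follows immediately from Theorem~\ref{theorempartialorder} and the transitivity of \(\preccurlyeq\), which is exactly the content of both of your routes. The inductive bookkeeping you describe is the right way to make that one-line remark rigorous, and there is indeed no hidden difficulty.
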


The following is a result along the same lines as the
previous two.

\begin{theorem} \label{theorem2}

    If \(A \preccurlyeq B\) and \(m \leq n\), then:

\begin{itemize}

\item[(i)] \(mA \preccurlyeq nB\); and
\item[(ii)] \(A^n \preccurlyeq B^m\).

\end{itemize}

\end{theorem}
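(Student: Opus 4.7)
The plan is to chain together results already established, using transitivity of $\preccurlyeq$ to bridge two intermediate inequalities in each part.

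For part $(i)$, I would first apply Corollary~\ref{corollary2}$(i)$ to the hypothesis $A \preccurlyeq B$, which directly yields $mA \preccurlyeq mB$. Then, separately, I would apply Corollary~\ref{corollary1}$(i)$ to the hypothesis $m \leq n$, which yields $mB \preccurlyeq nB$ (handling $m = n$ by reflexivity as a trivial case). Transitivity of $\preccurlyeq$ then gives $mA \preccurlyeq nB$.

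For part $(ii)$, the structure is perfectly analogous but uses the multiplicative counterparts. I would apply Corollary~\ref{corollary2}$(ii)$ to $A \preccurlyeq B$ to obtain $A^n \preccurlyeq B^n$, and then apply Corollary~\ref{corollary1}$(ii)$ to $m \leq n$ to obtain $B^n \preccurlyeq B^m$ (noting that the direction reverses here, consistent with Lemma~\ref{lemma2}$(ii)$ and the fact that adding more multiplicands can only decrease fault severity). Transitivity again closes the chain to give $A^n \preccurlyeq B^m$.

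There is essentially no obstacle, since both corollaries referenced are already in hand; the only point requiring mild care is the boundary case $m = n$, where Corollary~\ref{corollary1} does not directly apply (it assumes strict inequality $m < n$) but where the needed intermediate step collapses to an equality and hence to reflexivity of $\preccurlyeq$. I would mention this briefly to make the chain valid for the full range $m \leq n$, and otherwise keep the proof to a few lines noting the two applications and the transitive closure.
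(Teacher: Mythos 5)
Your proof is correct and follows essentially the same route as the paper: both parts chain Corollary~\ref{corollary2} with Corollary~\ref{corollary1} via transitivity, the only (immaterial) difference being that in part $(ii)$ the paper passes through the intermediate term $A^m$ while you pass through $B^n$. Your remark about the boundary case $m=n$ is a point of care the paper silently skips, and it does no harm to include it.
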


Note that $A^m$ and $B^n$ cannot be compared just based on the data
given.

\begin{proof}

  For $(i)$, we have that \(mA \preccurlyeq mB\) by
  Corollary~\ref{corollary2} $(i)$, because \(A \preccurlyeq B\).  We
  further have that \(mB \preccurlyeq nB\) by
  Corollary~\ref{corollary1} $(i)$, because \(m \leq n\).  By the
  transitivity of $\preccurlyeq$, we have the desired result.

  For $(ii)$, we have that \(A^n \preccurlyeq A^m\) by
  Corollary~\ref{corollary1} $(ii)$, because \(m \leq n\).  We further
  have that \(A^m \preccurlyeq B^m\) by Corollary~\ref{corollary2}
  $(ii)$, because \(A \preccurlyeq B\).  As before, by the
  transitivity of $\preccurlyeq$, we have the desired
  result. \qedhere

\end{proof}

These results can be generalized in the obvious way, as follows.

\begin{theorem}

  If \(A_i \preccurlyeq A_{i+1}, n_i \leq n_{i+1}\), and \(m_i \geq
  m_{i+1}\), for some range of values $i$, then we have:

\[ n_i A_i^{m_i} \preccurlyeq n_{i+1} A_{i+1}^{m_{i+1}} \]

\end{theorem}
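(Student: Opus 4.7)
The plan is to reduce the statement to two successive applications of Theorem~\ref{theorem2}, since the hypotheses $A_i \preccurlyeq A_{i+1}$, $n_i \leq n_{i+1}$, and $m_i \geq m_{i+1}$ match exactly the two parts of that theorem, just arranged so that both the multiplicative exponent and the additive coefficient change in the ``correct'' direction simultaneously.

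First I would treat the exponents. From $A_i \preccurlyeq A_{i+1}$ together with $m_{i+1} \leq m_i$, part $(ii)$ of Theorem~\ref{theorem2} (applied with $A := A_i$, $B := A_{i+1}$, $n := m_i$, $m := m_{i+1}$) yields
\[
A_i^{m_i} \preccurlyeq A_{i+1}^{m_{i+1}}.
\]
Next I would introduce the coefficients. Setting $A := A_i^{m_i}$ and $B := A_{i+1}^{m_{i+1}}$, we already have $A \preccurlyeq B$ from the previous display, and the hypothesis $n_i \leq n_{i+1}$ supplies the inequality on the multipliers, so part $(i)$ of Theorem~\ref{theorem2} gives
\[
n_i A_i^{m_i} \preccurlyeq n_{i+1} A_{i+1}^{m_{i+1}},
\]
which is the desired conclusion for a single adjacent pair of indices.

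If the statement is read as a chain over an entire range of $i$'s (e.g., comparing $n_0 A_0^{m_0}$ with $n_k A_k^{m_k}$), one only has to string the pairwise inequalities together using the transitivity of $\preccurlyeq$. No further combinatorial content is needed, and no reliance on the underlying fault-tolerance functions $\Phi_{best}, \Phi_{worst}$ is required; everything lives at the level of the partial-order axioms and the two consequences already encapsulated in Theorem~\ref{theorem2}.

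There is no serious obstacle: the only thing to be careful about is the asymmetric direction of the two monotonicities (the multiplicative exponent decreases while the additive coefficient increases), which matches the way Theorem~\ref{theorem2} is phrased and thus requires no separate argument. The note in the excerpt that $A^m$ and $B^n$ cannot be compared without further data confirms that only this particular pairing of the inequalities can be exploited, and my proposed proof uses exactly that pairing.
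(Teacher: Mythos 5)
Your proposal is correct and follows essentially the same route as the paper's own proof: first apply Theorem~\ref{theorem2}~$(ii)$ to get $A_i^{m_i} \preccurlyeq A_{i+1}^{m_{i+1}}$, then apply Theorem~\ref{theorem2}~$(i)$ with the coefficients $n_i \leq n_{i+1}$ to conclude. Your added remark about chaining adjacent pairs by transitivity is a harmless (and reasonable) elaboration of what the paper leaves implicit.
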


\begin{proof}

  It is clear that \(A_i^{m_i} \preccurlyeq A_{i+1}^{m_{i+1}}\),
  noting that \(A_i \preccurlyeq A_{i+1}\), \(m_i \geq m_{i+1}\), and
  applying Theorem~\ref{theorem2} $(ii)$.  Now, given that \(A_i^{m_i}
  \preccurlyeq A_{i+1}^{m_{i+1}}\), the fact that \(n_i \leq
  n_{i+1}\), in light of Theorem~\ref{theorem2} $(i)$, gives the
  result. \qedhere

\end{proof}

This theorem leads to the following obvious corollary.

\begin{corollary}

If \(m \leq n\), then \(m A^n \preccurlyeq n A^m\).

\end{corollary}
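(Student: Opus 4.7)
The plan is to recognize this statement as a direct specialization of the preceding theorem, obtained by collapsing the indexed family $\{A_i\}$ to a single system.

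More precisely, I would instantiate the hypotheses of the previous theorem by setting $A_i = A_{i+1} = A$, so $A_i \preccurlyeq A_{i+1}$ holds trivially by reflexivity of $\preccurlyeq$. Then I assign $n_i = m$ and $n_{i+1} = n$, for which the hypothesis $n_i \leq n_{i+1}$ is exactly $m \leq n$; and I assign $m_i = n$ and $m_{i+1} = m$, for which the hypothesis $m_i \geq m_{i+1}$ is again just $n \geq m$. With these substitutions the conclusion $n_i A_i^{m_i} \preccurlyeq n_{i+1} A_{i+1}^{m_{i+1}}$ reads $m A^n \preccurlyeq n A^m$, which is the desired result.

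As a sanity check, I would also note the alternative two-step derivation from the earlier results, showing that nothing is being smuggled in: Corollary~\ref{corollary1}~$(ii)$ gives $A^n \preccurlyeq A^m$ from $m \leq n$; then Theorem~\ref{theorem2}~$(i)$, applied with $A^n$ and $A^m$ in the roles of $A$ and $B$ and with the coefficient inequality $m \leq n$, yields $m A^n \preccurlyeq n A^m$. Both routes go through in one line each, so there is no real obstacle; the only thing to be careful about is matching the direction of the exponent inequality ($m_i \geq m_{i+1}$ in the previous theorem) against the direction $m \leq n$ in the present statement, since the exponents appear in reversed positions on the two sides of the conclusion.
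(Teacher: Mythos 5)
Your proposal is correct and matches the paper's intent exactly: the paper offers no proof, calling this an ``obvious corollary'' of the preceding theorem, and your instantiation ($A_i = A_{i+1} = A$, $n_i = m$, $n_{i+1} = n$, $m_i = n$, $m_{i+1} = m$) is precisely the specialization the author has in mind. Your alternative two-step route via Corollary~\ref{corollary1}~$(ii)$ and Theorem~\ref{theorem2}~$(i)$ is also valid and is in fact how the theorem itself was proved, so nothing circular is introduced.
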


\section{Monoids of Fault-Tolerance Equivalence
  Classes} \label{eqclasses}

It has previously been noted in Section~\ref{monoid} that the set of
all systems, along with the direct sum (respectively: direct product)
and the additive (respectively: multiplicative) identity defines a
monoid.  Another kind of monoid may be defined on classes of systems,
as follows.

We define an equivalence relation on fault tolerance, as follows.

\begin{definition} \label{faultequiv}

  A fault-tolerance equivalence relation by worst-case fault
  tolerance, \(\mathcal{R} (\sim)\), is given by \(A \sim B
  \longrightarrow \Phi_{worst}(A) = \Phi_{worst}(B)\) for all \(A, B \in
  \mathcal{U}\).

\end{definition}

Based on this, we note a simple theorem, given in~\cite{shrao}.

\begin{theorem} \label{monoidlemma1}

\(A_1 \sim B_1\) and \(A_2 \sim B_2\) together imply:

\begin{itemize}

\item[(i)] \(A_1 + A_2 \sim B_1 + B_2\); and

\item[(ii)] \(A_1 \times A_2 \sim B_1 \times B_2\).

\end{itemize}

\end{theorem}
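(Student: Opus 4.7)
The plan is to unfold the definition of $\sim$ from Definition~\ref{faultequiv} and reduce both statements to algebraic identities about $\Phi_{worst}$ that are already available from Theorem~\ref{theoremfaulttolerance}. Recall that $A \sim B$ means exactly $\Phi_{worst}(A) = \Phi_{worst}(B)$, so the hypotheses give $\Phi_{worst}(A_1) = \Phi_{worst}(B_1)$ and $\Phi_{worst}(A_2) = \Phi_{worst}(B_2)$, and the goal is to obtain the analogous equalities for $A_1 + A_2$ and $B_1 + B_2$, and for $A_1 \times A_2$ and $B_1 \times B_2$.

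For part $(i)$, I would invoke Theorem~\ref{theoremfaulttolerance} $(iv)$ to write
\[
\Phi_{worst}(A_1 + A_2) = \min\{\Phi_{worst}(A_1), \Phi_{worst}(A_2)\},
\]
then substitute the hypothesis equalities componentwise, using the fact that $\min$ depends only on the values of its arguments, to rewrite this as $\min\{\Phi_{worst}(B_1), \Phi_{worst}(B_2)\}$; one more application of Theorem~\ref{theoremfaulttolerance} $(iv)$ identifies this with $\Phi_{worst}(B_1 + B_2)$, giving $A_1 + A_2 \sim B_1 + B_2$.

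For part $(ii)$, the same strategy applies, this time appealing to Theorem~\ref{theoremfaulttolerance} $(ii)$, which yields
\[
\Phi_{worst}(A_1 \times A_2) = \Phi_{worst}(A_1) + \Phi_{worst}(A_2) + 1.
\]
Substituting the hypothesis equalities on the right-hand side and folding the expression back through Theorem~\ref{theoremfaulttolerance} $(ii)$ shows that it equals $\Phi_{worst}(B_1 \times B_2)$, which is the required $A_1 \times A_2 \sim B_1 \times B_2$.

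There is no real obstacle here: the theorem is a congruence statement for $\sim$ with respect to the two operations, and both $\min(\cdot,\cdot)$ and $(\cdot)+(\cdot)+1$ are well-defined functions of their numerical arguments, so the substitution is immediate once $\Phi_{worst}$ of a composite is reduced to $\Phi_{worst}$ of its parts. The only point worth flagging is that this argument relies essentially on the recursive formulas of Theorem~\ref{theoremfaulttolerance}; without them one could not rule out that $A_1 + A_2$ and $B_1 + B_2$ differ in internal structure in a way that affects the worst-case tolerance, and the same remark applies to the product case.
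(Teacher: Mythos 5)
Your proof is correct and follows essentially the same route as the paper's: both parts reduce to the recursive formulas of Theorem~\ref{theoremfaulttolerance}, parts $(iv)$ and $(ii)$ respectively, followed by substitution of the hypothesized equalities of $\Phi_{worst}$. If anything, your rendering of part $(i)$ as a direct substitution into $\min\{\Phi_{worst}(A_1), \Phi_{worst}(A_2)\}$ is cleaner than the paper's wording, which contains a typographical slip (a $+$ where a comma belongs inside the $\min$) and argues via a without-loss-of-generality choice of which argument attains the minimum.
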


\begin{proof}

  For part (i), we note from Theorem~\ref{theoremfaulttolerance} (iv)
  that \(\Phi_{worst}(A_1 + A_2) = \mathrm{min}\{\Phi_{worst}(A_1) +
  \Phi_{worst}(A_2)\}\).  Let this be \(\Phi_{worst}(A_1)\).
  Similarly, then, \(\Phi_{worst}(B_1 + B_2) = \Phi_{worst}(B_1)\),
  and the result obtains.

  For part (ii), we note from Theorem~\ref{theoremfaulttolerance}
  (ii) that \(\Phi_{worst}(A_1 \times A_2) = \Phi_{worst}(A_1) +
  \Phi_{worst}(A_2) + 1\).  This is also equal to \(\Phi_{worst}(B_1)
  + \Phi_{worst}(B_2) + 1\) by the definition of $\mathcal{R}$, which
  in its turn is equal to \(\Phi_{worst}(B_1 \times B_2)\). \qedhere

\end{proof}

Now we can state the major result (given for monoids by
Hungerford~\cite{Hungerford}), on constructing a monoid on the
equivalence classes of systems.  The proof given here is as given by
Hungerford.

\index{direct sum}
\begin{theorem} \label{monoidtheorem1}

  Let \(\mathcal{R}(\sim)\) be defined as in
  Definition~\ref{faultequiv}.  Then the set
  \(\mathcal{U}/\mathcal{R}\) of all equivalence classes of
  \(\mathcal{U}\) under $\mathcal{R}$ is a monoid under the binary
  operation defined by \(\overline{A} \boxplus \overline{B} =
  \overline{A + B}\), where \(\overline{A}\) denotes the equivalence
  class of \(A \in 2^{\mathcal{U}}\).

\end{theorem}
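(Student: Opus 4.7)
The plan is to verify the three defining properties of a monoid for $(\mathcal{U}/\mathcal{R}, \boxplus)$: (a) $\boxplus$ is a well-defined binary operation on equivalence classes, (b) $\boxplus$ is associative, and (c) there is a two-sided identity element. Each of these will be lifted from the corresponding structure on $(\mathcal{U}, +)$ already established in Section~\ref{monoid}, via the quotient map $A \mapsto \overline{A}$.

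First, I would check that $\boxplus$ is well-defined: that $\overline{A+B}$ does not depend on the choice of representatives drawn from $\overline{A}$ and $\overline{B}$. Concretely, if $A_1 \sim A_2$ and $B_1 \sim B_2$, then one must show $A_1 + B_1 \sim A_2 + B_2$ so that both representatives yield the same class on the right-hand side of the defining equation. This is precisely the content of Theorem~\ref{monoidlemma1}~(i), so this step reduces to a citation. Next, associativity of $\boxplus$ is inherited directly from associativity of $+$ on $\mathcal{U}$ (Proposition~\ref{opsproperties}~(i)): unwrapping the definition gives $(\overline{A} \boxplus \overline{B}) \boxplus \overline{C} = \overline{(A+B)+C} = \overline{A+(B+C)} = \overline{A} \boxplus (\overline{B} \boxplus \overline{C})$. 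Finally, the natural identity candidate is $\overline{\mathbf{0}}$, where $\mathbf{0}$ is the additive identity of $\mathcal{U}$; since $A + \mathbf{0} = \mathbf{0} + A = A$ holds in $\mathcal{U}$, we obtain $\overline{A} \boxplus \overline{\mathbf{0}} = \overline{A} = \overline{\mathbf{0}} \boxplus \overline{A}$, so the identity is two-sided.

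There is no substantive obstacle; the only step requiring real content is well-definedness, and that content has already been discharged in Theorem~\ref{monoidlemma1}~(i). Everything else is a mechanical transport of the monoid axioms of $(\mathcal{U},+)$ across the quotient, following the standard template Hungerford~\cite{Hungerford} uses for building a quotient monoid from a congruence relation.
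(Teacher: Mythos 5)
Your proposal is correct and follows essentially the same route as the paper's own proof: well-definedness via Theorem~\ref{monoidlemma1}~(i), associativity inherited from $+$ on $\mathcal{U}$, and $\overline{\mathbf{0}}$ as the two-sided identity. No substantive differences.
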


\begin{proof}

  If \(\overline{A_1} = \overline{A_2}\) and \(\overline{B_1} =
  \overline{B_2}\), where \(A_i, B_i \in \mathcal{U}\), then \(A_1 \sim
  A_2\) and \(B_1 \sim B_2\).  By Theorem~\ref{monoidlemma1} part (i),
  \(A_1 + B_1 \sim A_2 + B_2\), so that \(\overline{A_1 + B_1} =
  \overline{A_2 + B_2}\).  Therefore, the binary operation $\boxplus$
  in \(\mathcal{U}/\mathcal{R}\) is well-defined (i.e., it is
  independent of the choice of equivalence-class representatives).  It
  is associative since \(\overline{A + (B + C)} = \overline{A}
  \boxplus (\overline{B} \boxplus \overline{C}) = \overline{A}
  \boxplus (\overline{B + C}) = \overline{(A + B) + C} = (\overline{A
    + B}) \boxplus \overline{C} = (\overline{A} \boxplus \overline{B})
  \boxplus \overline{C}\).

  The identity element is \(\overline{\mathbf{0}}\), the equivalence
  class of all systems that are always up, since \(\overline{A}
  \boxplus \overline{\mathbf{0}} = \overline{A + {\mathbf{0}}} =
  \overline{A}.\) Therefore, \((\mathcal{U}/\mathcal{R}, \boxplus)\)
  is a monoid. \qedhere

\end{proof}

An analogous theorem can also be stated in respect of the $\times$
operator, with an identity element $\overline{\mathbf{1}}$, the
equivalence class of systems that are always up.  The proof is exactly
similar and is thus omitted.

\index{direct product}
\begin{theorem} \label{monoidtheorem2}

  Let \(\mathcal{U}\) be the set of all systems, and the relation
  \(\mathcal{R}(\sim)\) be defined as in Theorem~\ref{monoidlemma1}.
  Then the set \(\mathcal{U}/\mathcal{R}\) of all equivalence classes
  of \(\mathcal{U}\) under $\mathcal{R}$ is a monoid under the binary
  operation defined by \((\overline{A}) \boxtimes (\overline{B}) =
  \overline{A \times B}\), where \(\overline{A} \subseteq
  \mathcal{U}\) denotes the equivalence class of \(A\).

\end{theorem}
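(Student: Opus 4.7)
The plan is to mirror the proof of Theorem~\ref{monoidtheorem1} step for step, replacing $+$ with $\times$, $\boxplus$ with $\boxtimes$, and invoking part (ii) of Theorem~\ref{monoidlemma1} wherever part (i) was used. The three things to verify are that $\boxtimes$ is well-defined on the quotient $\mathcal{U}/\mathcal{R}$, that it is associative, and that it has a two-sided identity.

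First, I would check well-definedness. Suppose $\overline{A_1} = \overline{A_2}$ and $\overline{B_1} = \overline{B_2}$; then by the definition of $\mathcal{R}$ we have $A_1 \sim A_2$ and $B_1 \sim B_2$. Theorem~\ref{monoidlemma1}(ii) immediately gives $A_1 \times B_1 \sim A_2 \times B_2$, so $\overline{A_1 \times B_1} = \overline{A_2 \times B_2}$, showing that $\overline{A} \boxtimes \overline{B} := \overline{A \times B}$ does not depend on the choice of representatives.

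Next, I would derive associativity by lifting to $\mathcal{U}$ and using the associativity of $\times$ granted by Proposition~\ref{opsproperties}(i): the chain $(\overline{A} \boxtimes \overline{B}) \boxtimes \overline{C} = \overline{(A \times B) \times C} = \overline{A \times (B \times C)} = \overline{A} \boxtimes (\overline{B} \boxtimes \overline{C})$ makes this transparent. For the identity, I would propose $\overline{\mathbf{1}}$, the class containing the ``always down'' system, and verify $\overline{A} \boxtimes \overline{\mathbf{1}} = \overline{A \times \mathbf{1}} = \overline{A}$ (and symmetrically on the left), which follows from $\mathbf{1}$ being the two-sided multiplicative identity in $(\mathcal{U},\times)$.

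There is no real obstacle here: every non-trivial ingredient already appears in the proof of Theorem~\ref{monoidtheorem1} or in the results cited above, and the only thing worth flagging is that Theorem~\ref{monoidlemma1}(ii) is precisely the congruence property of $\sim$ with respect to $\times$, which is exactly what well-definedness on the quotient requires. The proof therefore reduces to transcribing the earlier argument with the multiplicative notation and citing part (ii) in place of part (i).
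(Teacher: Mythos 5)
Your proposal is correct and is exactly the argument the paper intends: the paper omits the proof of Theorem~\ref{monoidtheorem2} as ``exactly similar'' to that of Theorem~\ref{monoidtheorem1}, and you have supplied precisely that transcription, using Theorem~\ref{monoidlemma1}(ii) for well-definedness, associativity of $\times$ for associativity of $\boxtimes$, and $\overline{\mathbf{1}}$ as the identity. You even correctly describe $\overline{\mathbf{1}}$ as the class of the ``always down'' system, where the paper's surrounding text contains a slip calling it ``always up.''
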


Therefore, we have a second monoid, \((\mathcal{U}/\mathcal{R},
\boxtimes)\).

\begin{remark} \label{systemclasses}

  Note that \(\mathcal{U}/\mathcal{R}\) is a set of \emph{classes of
    systems}, rather than of systems themselves.  Therefore,
  \(\mathcal{U}/\mathcal{R} \subseteq 2^{\mathcal{U}}\), and any
  \(\overline{A} \subseteq \mathcal{U}\) (or: \(\overline{A} \in
  2^{\mathcal{U}}\)), where \(A \in \mathcal{U}\), and where
  \(\overline{A}\) denotes the equivalence class of $A$ by worst-case
  fault tolerance.

\end{remark}

It is also possible to re-work Theorem~\ref{monoidtheorem1},
considering the best-case fault tolerance.  To show how, we first
state the analogue of Theorem~\ref{monoidlemma1}.

\begin{theorem} \label{monoidlemma2}

  If we define an equivalence relation \(\mathcal{R} (\sim)\) on
  systems by best-case fault tolerance, such that \(A \sim B
  \longrightarrow \Phi_{best}(A) = \Phi_{best}(B)\), then

  \(A_1 \sim B_1\) and \(A_2 \sim B_2\) imply \(A_1 + A_2 \sim B_1 +
  B_2\).

\end{theorem}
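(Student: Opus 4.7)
The plan is to reduce this directly to Theorem~\ref{theoremfaulttolerance} part $(iii)$, which asserts that \(\Phi_{best}\) distributes additively over the direct sum: \(\Phi_{best}(X + Y) = \Phi_{best}(X) + \Phi_{best}(Y)\). This structural fact makes the argument almost mechanical and parallels the proof of Theorem~\ref{monoidlemma1}~$(i)$, but is in fact simpler since no minimum operation appears and no case analysis on which argument attains the min is required.

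First I would unpack the hypotheses through the new definition of \(\sim\): \(A_1 \sim B_1\) gives \(\Phi_{best}(A_1) = \Phi_{best}(B_1)\), and similarly \(A_2 \sim B_2\) gives \(\Phi_{best}(A_2) = \Phi_{best}(B_2)\). Next I would invoke Theorem~\ref{theoremfaulttolerance}~$(iii)$ twice, once for the pair \((A_1,A_2)\) and once for \((B_1,B_2)\), obtaining \(\Phi_{best}(A_1 + A_2) = \Phi_{best}(A_1) + \Phi_{best}(A_2)\) and \(\Phi_{best}(B_1 + B_2) = \Phi_{best}(B_1) + \Phi_{best}(B_2)\). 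Finally I would substitute the equalities from the hypotheses into the right-hand sides to conclude \(\Phi_{best}(A_1 + A_2) = \Phi_{best}(B_1 + B_2)\), which by the defining condition of \(\mathcal{R}(\sim)\) is exactly \(A_1 + A_2 \sim B_1 + B_2\).

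There is no real obstacle here; the only point worth a brief remark is that the proof does \emph{not} extend to the product case in the same seamless way that Theorem~\ref{monoidlemma1}~$(ii)$ does, because Theorem~\ref{theoremfaulttolerance}~$(i)$ involves the term \(\max\{\Phi_{best}(A) + |B|,\, |A| + \Phi_{best}(B)\}\), which depends on the component counts \(|A|\) and \(|B|\) and not merely on \(\Phi_{best}\). Hence congruence under \(\times\) is \emph{not} an automatic companion to the statement being proved, and the present theorem is correctly restricted to the \(+\) operator.
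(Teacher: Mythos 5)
Your proof is correct and follows essentially the same route as the paper: both apply Theorem~\ref{theoremfaulttolerance}~$(iii)$ to rewrite \(\Phi_{best}(A_1+A_2)\) and \(\Phi_{best}(B_1+B_2)\) as sums, then substitute the equalities given by the hypotheses. Your closing remark about why the argument does not carry over to \(\times\) also matches the observation the paper makes immediately after this theorem.
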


\begin{proof}

  We note from Theorem~\ref{theoremfaulttolerance} $(iii)$ that
  \(\Phi_{best}(A_1 + A_2) = \Phi_{best}(A_1) + \Phi_{best}(A_2)\).
  If \(A_1 \sim B_1\) and \(A_2 \sim B_2\), then \(\Phi_{best}(A_1) +
  \Phi_{best}(A_2) = \Phi_{best}(B_1) + \Phi_{best}(B_2)\).  This
  latter expression is \(\Phi_{best}(B_1 + B_2)\), giving us the
  result. \qedhere

\end{proof}

Note that the analogue of Theorem~\ref{monoidlemma1}(ii) is not
generally true---if \(\mathcal{R}(\sim)\) denotes an equivalence
relation by best-case fault-tolerance, then \(A_1 \sim B_1\) and \(A_2
\sim B_2\) do not imply \(A_1 \times A_2 \sim B_1 \times B_2\).

Therefore, we can re-state Theorem~\ref{monoidtheorem1} (but not
Theorem~\ref{monoidtheorem2}) using the new definition of
$\mathcal{R}$.  The statement and proof run just as previously,
however, so we do not belabor the point.

We may summarize the results of this section, however, to note that
there are three types of equivalence-class monoids so obtained:

\begin{itemize}

\item the monoid of worst-case equivalence classes under direct sums;

\item the monoid of worst-case equivalence classes under direct
  products; and

\item the monoid of best-case equivalence classes under direct sums.

\end{itemize}

\section{The Semiring of Fault-Tolerance Equivalence
  Classes} \label{eqclasssemirings}

It has been shown previously in Theorems~\ref{monoidtheorem1}
and~\ref{monoidtheorem2} that there exist monoids
\((\mathcal{U}/\mathcal{R}, \boxplus)\) and
\((\mathcal{U}/\mathcal{R}, \boxtimes)\) on the equivalence classes of
systems by worst-case fault tolerances, and that these are commutative
monoids.  

It is easily seen that the other two conditions for a
semiring~\cite{Golan1999} (see Section~\ref{monoid}) are also
satisfied:

\begin{itemize}

\item $\boxtimes$ distributes over $\boxplus$---for any
  \(\overline{A}, \overline{B}, \overline{C} \in 2^{\mathcal{U}}\), we
    have:
\begin{eqnarray*}
(\overline{A} \boxtimes \overline{B}) \boxplus (\overline{A} \boxtimes \overline{C}) \ & \ = (\overline{A \times B}) \boxplus (\overline{A \times C}) \\
& \ = \overline{(A \times B) + (A \times C)} \\
& \ = \overline{A \times (B + C)} \\
& \ = \overline{A} \boxtimes (\overline{B} \boxplus \overline{C})
\end{eqnarray*}

\item \(\overline{\mathbf{0}} \boxtimes \overline{A} =
  \overline{\mathbf{0} \times A} = \overline{\mathbf{0}} =
  \overline{A} \boxtimes \overline{\mathbf{0}}\), for all
  \(\overline{A} \subseteq \mathcal{U}\)

\end{itemize}

Therefore, we can consider a different semiring, \((2^{\mathcal{U}},
\boxplus, \boxtimes)\), and it turns out that as with \((\mathcal{U},
+, \times)\), this too is \emph{zerosumfree}, \emph{entire},
\emph{simple}, and \emph{commutative} (compare with the corresponding
Remarks~\ref{remark1},~\ref{remark2},~\ref{remark3},
and~\ref{remark4}).

It is further possible to define a partial ordering relation (denoted
by the symbol $\lesssim$, for example) comparing the fault tolerances
of different classes of systems.  All of Section~\ref{partialordering}
can thus be repeated with $2^{\mathcal{U}}$ in place of $\mathcal{U}$,
$\overline{A}$ and such in place of $A$, and $\lesssim$ in place of
$\preccurlyeq$.

\section{Acknowledgements} \label{ack}

The author would like to thank Ted Herman and Sukumar Ghosh for their
wholesome encouragement of his research in this area, and for
important feedback at the early stages.

\end{document}